\numberwithin{equation}{section}
\newtheorem{theorem}{Theorem}[section]
\newtheorem{lemma}[theorem]{Lemma}
\newtheorem{corollary}[theorem]{Corollary}
\newtheorem{proposition}[theorem]{Proposition}
\theoremstyle{definition}
\newtheorem{remark}[theorem]{Remark}
\newcommand{\ind}{\mathds{1}}
\newcommand{\diag}{\delta}
\newcommand{\pr}{\mathds{P}}
\newcommand{\ex}{\mathds{E}}
\newcommand{\R}{\mathds{R}}
\newcommand{\ph}{\varphi}
\renewcommand{\le}{\leqslant}
\renewcommand{\ge}{\geqslant}
\NewDocumentCommand{\formula}{ssom}{%
 \IfBooleanTF{#1}{%
  \IfBooleanTF{#2}{%
   \IfValueTF{#3}%
    {\begin{align}\label{#3}\begin{gathered}#4\end{gathered}\end{align}}%
    {\begin{gather}#4\end{gather}}%
  }{%
   \IfValueTF{#3}%
    {\begin{align}\label{#3}\begin{aligned}#4\end{aligned}\end{align}}%
    {\begin{gather*}#4\end{gather*}}%
  }%
 }{%
  \IfValueTF{#3}%
   {\begin{align}\label{#3}#4\end{align}}%
   {\begin{align*}#4\end{align*}}%
 }%
}
\begin{document}

\title[Fleming--Viot couples live forever]{Fleming--Viot couples live forever}
\author{Mateusz Kwaśnicki}
\dedicatory{In loving memory of my father}
\thanks{Work supported by the Polish National Science Centre (NCN) grant no.\@ 2019/33/B/ST1/03098}
\address{Mateusz Kwaśnicki \\ Department of Pure Mathematics \\ Wrocław University of Science and Technology \\ ul. Wybrzeże Wyspiańskiego 27 \\ 50-370 Wrocław, Poland}
\email{mateusz.kwasnicki@pwr.edu.pl}
\date{\today}
\keywords{Fleming--Viot system, extinction, Hunt process}
\subjclass[2010]{60G17,60J25,60J80}

\begin{abstract}
We prove a non-extinction result for Fleming--Viot-type systems of two particles with dynamics described by an arbitrary symmetric Hunt process under the assumption that the reference measure is finite. Additionally, we describe an invariant measure for the system, we discuss its ergodicity, and we prove that the reference measure is a stationary measure for the embedded Markov chain of positions of the surviving particle at successive branching times.
\end{abstract}

\maketitle

%
%

\section{\uppercase{Introduction}}
\label{sec:intro}

Consider a Hunt process $X(t)$ on a state space $D$, with a possibly finite lifetime~$\zeta$. The Fleming--Viot system of $K \ge 2$ particles is a collection of $K$ processes $\bar X_k(t)$, which evolve as follows. The particles are independent copies of $X(t)$ up to the random time $\tau_1$ when one of the particles reaches its lifetime. Instead of dying, however, this particle immediately jumps to the position of a randomly chosen other particle. Then the particles again evolve as independent copies of $X(t)$ until one of them reaches its lifetime, at a random time $\tau_2$, and so the process continues.

Alternatively, one can think that a randomly chosen particle branches into two whenever some particle dies. For this reason, the random times $\tau_n$ described above are often called \emph{branching times}. One expects that, under reasonable assumptions, the branching times $\tau_n$ diverge to infinity, and so the Fleming--Viot system is well-defined for every $t \in (0, \infty)$. It is known, however, that in some cases the limit $\tau_\infty = \lim_{n \to \infty} \tau_n$ can be finite with positive probability, and then there is no obvious way to extend the evolution of the Fleming--Viot system past $\tau_\infty$. Therefore, we say that \emph{extinction}, or \emph{branching explosion}, occurs at $\tau_\infty$ whenever it is finite.

The main result of this paper deals with just $K = 2$ particles. Note that in this case at each branching time there is only one surviving particle, and so both particles occupy the same location at each branching time. Our key assumption is that the Hunt processes $X(t)$ is \emph{self-dual} (or \emph{symmetric}) with respect to a \emph{finite} reference measure~$m$. Here by self-duality we simply mean that the transition operators of $X(t)$ are self-adjoint on $L^2(m)$.

\begin{theorem}
\label{thm:main}
Consider the Fleming--Viot system of two particles evolving according to a self-dual Hunt process on a state space $D$ with a finite reference measure~$m$. For almost every initial configuration of the two particles, either (a)~on $D \times D$ (with respect to the product measure $m \times m$); or (b)~on the diagonal of $D \times D$ (with respect to the measure with marginals $m$), extinction never happens.
\end{theorem}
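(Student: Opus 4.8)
The plan is to reduce the theorem to a statement about the embedded Markov chain of the surviving particle's positions. Put $\tau_0 = 0$, let $Y_n$ denote the common position of the two particles immediately after the $n$-th branching time $\tau_n$ (so in case~(b) the chain starts from $Y_0$ equal to the initial position), and let $\xi_n = \tau_{n+1}-\tau_n$. Conditionally on $Y_n = x$, the pair $(\xi_n, Y_{n+1})$ is produced by running two independent copies $X^{(1)}, X^{(2)}$ of $X$ from $x$: then $\xi_n = \min(\zeta^{(1)},\zeta^{(2)})$ and $Y_{n+1}$ is the position of the surviving copy at that instant. Hence $(Y_n)$ is a Markov chain on $D$, and extinction is precisely the event $\{\sum_{n\ge 0}\xi_n < \infty\}$. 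The heart of the proof --- and one of the results advertised in the abstract --- is the claim that $m$ is a stationary measure for $(Y_n)$, i.e.\ $\int_D \ex_x[g(Y_1)]\,m(dx) = \int_D g\,dm$ for every bounded $g \ge 0$.

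To establish stationarity I would use the symmetry between the two copies. Since $X^{(1)}$ and $X^{(2)}$ both start at $x$, the events ``$X^{(1)}$ dies first'' and ``$X^{(2)}$ dies first'' contribute equally, so, writing $\mu_x$ for the law of $\zeta$ under $\pr_x$ and $P_t$ for the sub-Markovian transition semigroup of $X$ on $L^2(m)$,
\begin{equation}\label{eq:emb}
 \ex_x[g(Y_1)] = 2 \int_{[0,\infty)} (P_t g)(x)\,\mu_x(dt) , \qquad \mu_x((t,\infty)) = (P_t\ind)(x) .
\end{equation}
Integrating \eqref{eq:emb} over $m$ and integrating by parts in $t$ (using $\mu_x(dt) = -d_t(P_t\ind)(x)$), the boundary term at $t = 0$ gives $2\int_D g\,dm$, while the rest is $2\int_0^\infty\int_D (P_t\ind)\,\partial_t(P_t g)\,dm\,dt$. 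This is where self-duality and finiteness of $m$ enter: $P_t$, and hence the generator $\mathcal A$, is self-adjoint on $L^2(m)$, and $\ind \in L^2(m)$ because $m(D) < \infty$, so $\int_D (P_t\ind)\,\mathcal A(P_t g)\,dm = \int_D \mathcal A(P_t\ind)\,(P_t g)\,dm$, and therefore $\partial_t\int_D (P_t\ind)(P_t g)\,dm = 2\int_D (P_t\ind)\,\partial_t(P_t g)\,dm$. Integrating this identity in $t$ collapses the remaining term to $[\int_D (P_t\ind)(P_t g)\,dm]_0^\infty = -\int_D g\,dm$ (using $\int_D (P_t\ind)(P_t g)\,dm \to 0$ as $t \to \infty$), so $\int_D \ex_x[g(Y_1)]\,m(dx) = 2\int_D g\,dm - \int_D g\,dm = \int_D g\,dm$, which is the claim.

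Granting stationarity, the conclusion follows by soft arguments. Since $m(D) < \infty$ we may normalise $m$ to a probability $\bar m$; under $\pr_{\bar m}$ the chain $(Y_n)$ --- and hence the sequence $(\xi_n)_{n\ge0}$ of nonnegative random variables --- is stationary. If extinction had positive $\pr_{\bar m}$-probability, then $\xi_n \to 0$ on that event, but for every $\eps > 0$ one has $\pr_{\bar m}(\xi_n\to 0)\le\pr_{\bar m}(\xi_n\le\eps\text{ eventually})\le 1 - \limsup_n\pr_{\bar m}(\xi_n>\eps) = 1 - \pr_{\bar m}(\xi_0>\eps)$, and letting $\eps\downarrow0$ gives $\pr_{\bar m}(\xi_n\to0)\le\pr_{\bar m}(\xi_0 = 0) = \int_D\pr_x(\xi_0 = 0)\,\bar m(dx) = 0$, since $\pr_x(\xi_0 = 0)\le2\pr_x(\zeta = 0) = 0$ for $m$-a.e.\ $x$ by strong continuity of $P_t$ on $L^2(m)$. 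Thus $\pr_{\bar m}(\tau_\infty = \infty) = 1$, and integrating out the diagonal initial position yields case~(b): $\pr_{(x,x)}(\tau_\infty = \infty) = 1$ for $m$-a.e.\ $x$. For case~(a), run the two particles from an off-diagonal $(x,y)$ up to the first branching time $\tau_1$; if $\tau_1 = \infty$ there is nothing to prove, and otherwise the strong Markov property of the Fleming--Viot system at $\tau_1$ reduces the claim to showing $\pr_{(x,y)}(Y_1\in N,\,\tau_1<\infty) = 0$ for $(m\times m)$-a.e.\ $(x,y)$, where $N$ is the $m$-null exceptional set from case~(b). An analogous symmetry computation gives $\int_{D\times D}\ex_{(x,y)}[\ind_N(Y_1)\ind_{\tau_1<\infty}]\,m(dx)\,m(dy) = 2\int_0^\infty\langle\ind_N, P_t\ind\rangle_{L^2(m)}\,\nu(dt)$ for a finite measure $\nu$ on $[0,\infty)$, and $\langle\ind_N, P_t\ind\rangle_{L^2(m)} = \int_N (P_t\ind)\,dm \le m(N) = 0$, which settles~(a).

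I expect the main obstacle to be the stationarity claim, that is, making the semigroup manipulations of the second paragraph rigorous: controlling operator domains in the integration by parts, justifying $\int_D (P_t\ind)(P_t g)\,dm \to 0$, and --- when $X$ has infinite lifetime on a set of positive $m$-measure --- correctly carrying the harmonic part $\lim_{t\to\infty}P_t\ind$, for which $m$ becomes only sub-stationary for $(Y_n)$ and the computation must be adjusted. Everything downstream of the stationarity claim is a routine combination of the ergodic theorem for stationary sequences and the (strong) Markov property.
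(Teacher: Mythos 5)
Your proposal is essentially correct and isolates the same key fact as the paper --- that $m$ is stationary for the embedded chain of branching positions (the paper's Proposition~\ref{prop:dynkin}, made rigorous as Lemma~\ref{lem:dynkin}) --- but both the proof of that fact and the deduction of non-extinction from it differ from the paper's. For the stationarity claim, your exit-law computation ($\mu_x(dt) = -d_t(P_t\ind)(x)$, integrate by parts, move the generator across the inner product) is precisely the argument the paper records in the remark following Lemma~\ref{lem:dynkin} and attributes to the referee; the paper deliberately avoids it because it presupposes the existence of a jointly measurable exit-law density with $P_s\ell_t = \ell_{t+s}$, which requires a nontrivial approximation argument via Fitzsimmons--Getoor. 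The paper instead applies its Lemma~\ref{lem:reduction} (the self-duality identity $\langle P_tf,P_tg\rangle = \langle f,P_{2t}g\rangle$ integrated in $t$) to $g = \mu\ind - \mu^2 U_\mu\ind$ and lets $\mu\to\infty$, which is elementary and sidesteps exactly the domain and density issues you flag as the main obstacle --- so you have correctly located the hard point, but your sketch of it would need the most work to complete. For the endgame, your argument that $\pr(\xi_n\to 0)\le\pr(\xi_0=0)=0$ under the stationary start is a nice softening of the ergodic-theorem sketch in Section~\ref{sec:idea} and is valid, but it forces you to handle the set where $\zeta=\infty$ separately (where $m$ is only sub-stationary), as you note. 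The paper's actual proof of Theorem~\ref{thm:two} is slicker: it uses the Laplace-transformed version of the key identity ($\lambda>0$) applied directly to $f(x)=\bar\ex^{x,x}e^{-2\lambda\tau_\infty}$, yielding $\int_D f(x)(1-\ex^xe^{-\lambda\zeta})\,m(dx)=0$ and hence $f=0$ a.e.; this needs no embedded chain, no stationary-sequence argument, and no conservative/dissipative decomposition. Your reduction of case~(a) to case~(b) via $\pr^{m\times m}(Z_1\in\cdot)\le 2m(\cdot)$ coincides with the paper's Corollary~\ref{cor:two}.
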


\begin{remark}
The Fleming--Viot system of particles following the Brownian motion was introduced in~\cite{bhim,bhm}. A non-extinction result was stated already in~\cite{bhm}, but the proof given there has an error. This led to an open problem which has been resolved only for sufficiently regular domains.

More precisely, non-extinction was established rigorously in~\cite{gk} under the assumption that the domain satisfies the interior and exterior cone conditions, and the interior cone has a sufficiently large aperture. The result of~\cite{gk} also covers particles following a general diffusion with smooth coefficients. A very similar result was simultaneously proved in~\cite{bbf} for Lipschitz domains with a sufficiently small Lipschitz constant, as well as for systems of two particles in polyhedral domains.

Our Theorem~\ref{thm:main} shows that no regularity is needed for Fleming--Viot systems of two Brownian particles, and that the Brownian motion can be replaced by an arbitrary self-dual Hunt process with a finite reference measure. The question for Fleming--Viot systems of more than two particles, however, remains open.

We mention here a related stream of research~\cite{bb,be,bem,bt,tough} on the \emph{spine} (the path of the surviving particle) of the Fleming--Viot system. In particular, the results of the present paper are used in~\cite{bem} to show that for Fleming--Viot systems of two particles following the Brownian motion in an interval, the spine does not coincide with the Brownian motion conditioned to live forever.
\end{remark}

\begin{remark}
Self-duality is an essential assumption in Theorem~\ref{thm:main}. For example, if $D = (0, 1)$ and $X(t)$ is the uniform motion to the left (that is, $X(t) = X(0) - t$ for $t$ less than the life-time $\zeta = X(0)$), then $\tau_\infty = \max\{\bar X_1(0), \bar X_2(0)\}$ is always finite.

We expect that for many self-dual Hunt processes on state spaces $D$ with an infinite reference measure~$m$ the assertion Theorem~\ref{thm:main} holds true, but the assumption that $m$ is finite cannot be completely removed. In other words, if $m$ is an infinite measure, then the system may become extinct in finite time with positive probability. For example, Theorem~1.1(a) in~\cite{bbp} asserts that Bessel processes of negative dimension lead to Fleming--Viot systems of two particles with $\tau_\infty < \infty$ almost surely; see also Example~5.7 in~\cite{bbf}. We note that a Bessel process of dimension $\nu \in \R$ is a self-dual Hunt process on $(0, \infty)$, but the reference measure $r^{\nu - 1} dr$ has infinite mass near the origin when $\nu \le 0$.
\end{remark}

Under mild additional assumptions, Theorem~\ref{thm:main} holds for every initial configuration. This is illustrated by the following result, which clearly covers the case of Brownian particles.

\begin{corollary}
\label{cor:all}
Consider the Fleming--Viot system of two particles evolving according to a self-dual Hunt process $X(t)$ on a state space $D$ with a finite reference measure~$m$. Suppose that the one-dimensional distributions of $X(t)$ are absolutely continuous with respect to $m$ for every $t > 0$ and every starting point $X(0) \in D$. Then, the Fleming--Viot system of two particles never becomes extinct, regardless of the initial configuration of the particles.
\end{corollary}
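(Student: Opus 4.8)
The strategy is to promote Theorem~\ref{thm:main} from ``almost every'' to ``every'' initial configuration by letting the system run for a short time and then invoking the Markov property together with the absolute continuity hypothesis. Write $p(x_1, x_2) = \pr_{(x_1, x_2)}(\tau_\infty = \infty)$ for the non-extinction probability starting from a configuration $(x_1, x_2) \in D \times D$, and let $N = \{(x_1, x_2) : p(x_1, x_2) < 1\}$. Theorem~\ref{thm:main} asserts that $N$ is $(m \times m)$-null and that $N$ meets the diagonal of $D \times D$ in a set that is null for the measure with marginals~$m$; the goal is to deduce that $N = \emptyset$.

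Fix $(x_1, x_2) \in D \times D$ and $t > 0$. On the event $\{\tau_\infty > t\}$ only finitely many branchings have occurred by time~$t$, so $(\bar X_1(t), \bar X_2(t))$ is well-defined there, and on this event $\{\tau_\infty = \infty\}$ is exactly the event that the system restarted at time~$t$ lives forever. The Markov property of the Fleming--Viot system therefore gives
\formula{
 p(x_1, x_2) = \ex_{(x_1, x_2)}\bigl[\ind_{\{\tau_\infty > t\}}\, p(\bar X_1(t), \bar X_2(t))\bigr] .
}
The key claim is that for all but countably many $t > 0$ one has $p(\bar X_1(t), \bar X_2(t)) = 1$ almost surely on $\{\tau_\infty > t\}$. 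Granting the claim, the displayed identity reads $p(x_1, x_2) = \pr_{(x_1, x_2)}(\tau_\infty > t)$; since $\pr_{(x_1, x_2)}(\tau_\infty > t) = p(x_1, x_2) + \pr_{(x_1, x_2)}(t < \tau_\infty < \infty)$, this forces $\pr_{(x_1, x_2)}(t < \tau_\infty < \infty) = 0$ for all such~$t$, and letting $t \downarrow 0$ along the admissible values (which are co-countable, hence accumulate at~$0$) gives $\pr_{(x_1, x_2)}(0 < \tau_\infty < \infty) = 0$. Finally $\tau_\infty \ge \tau_1 > 0$ almost surely, because each particle has a strictly positive lifetime by right-continuity of the Hunt process; hence $\pr_{(x_1, x_2)}(\tau_\infty < \infty) = 0$, which is the assertion.

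To prove the claim, write $\{\tau_\infty > t\} = \bigsqcup_{n \ge 0} \{\tau_n \le t < \tau_{n+1}\}$ with $\tau_0 := 0$, and exclude the countably many $t$ at which some $\tau_n$ has an atom. On $\{\tau_n \le t < \tau_{n+1}\}$ the two particles coincide at time $\tau_n$, say at the surviving position $Y_n$ (which is measurable with respect to the history up to~$\tau_n$), and over $(\tau_n, t]$ they evolve as two independent copies of $X$ started from $Y_n$ with no further branching. Thus, conditioning on the history up to $\tau_n$ and intersecting with $\{\tau_n \le t < \tau_{n+1}\}$, the pair $(\bar X_1(t), \bar X_2(t))$ is distributed as a pair of independent copies of $X$ run for the strictly positive time $t - \tau_n$, restricted to the event that both are still alive; by the absolute continuity hypothesis this law is absolutely continuous with respect to $m \times m$. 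Consequently it assigns no mass to the $(m \times m)$-null part of~$N$, nor to the part of~$N$ on the diagonal: two independent copies of $X$ can share a location only at an atom~$a$ of~$m$, and such points $(a, a)$ — having positive mass for the measure with marginals~$m$ — are excluded from $N$ by Theorem~\ref{thm:main}. Hence $(\bar X_1(t), \bar X_2(t)) \notin N$ almost surely on each piece, and summing over $n$ proves the claim.

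The one step requiring genuine care is this conditioning argument: one must check that conditioning on the history up to $\tau_n$ and restricting to $\{\tau_n \le t < \tau_{n+1}\}$ really does reduce the last leg of the trajectory to two independent copies of $X$ over a positive time span, so that absolute continuity of the time-$s$ law of $X$ for $s > 0$ transfers to the joint law of $(\bar X_1(t), \bar X_2(t))$ at time~$t$; the handling of possible atoms of $m$ on the diagonal, via part~(b) of Theorem~\ref{thm:main}, is a minor additional subtlety. Everything else — the Markov property, the decomposition into the events $\{\tau_n \le t < \tau_{n+1}\}$, and the passage $t \downarrow 0$ — is routine bookkeeping.
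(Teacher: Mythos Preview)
Your argument is correct, but it takes a different and somewhat longer route than the paper. The paper applies the strong Markov property at the \emph{first branching time} $\tau_1$: at that moment both particles sit at the same point $Z_1$, and since $Z_1$ is a mixture over $t$ of the laws $p_t(x,\cdot)$ and $p_t(y,\cdot)$ (integrated against the lifetime distribution of the other particle), the absolute continuity hypothesis makes the law of $Z_1$ absolutely continuous with respect to $m$. One then invokes the diagonal part~(b) of Theorem~\ref{thm:main} directly, and the proof is finished in two lines. Your approach instead uses the simple Markov property at a fixed time $t$, decomposes according to the number of branchings before $t$, shows that on each piece the time-$t$ configuration has law absolutely continuous with respect to $m\times m$, and appeals to part~(a); then you pass $t\downarrow 0$. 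This works, but the stopping-time route avoids the countable decomposition, the exclusion of atoms of the $\tau_n$, and the limit $t\downarrow 0$ altogether.

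Two small points on your write-up. First, the case $n=0$ does not fit your template ``the two particles coincide at time $\tau_n$'': at $\tau_0=0$ they are at $(x_1,x_2)$, not at a common $Y_0$. The conclusion is unaffected, since on $\{0\le t<\tau_1\}$ the pair still evolves as two independent copies of $X$ for the positive time $t$, giving a law absolutely continuous with respect to $m\times m$; you should just say so explicitly. Second, the separate diagonal argument invoking atoms of $m$ and part~(b) is unnecessary: a product of two measures absolutely continuous with respect to $m$ is absolutely continuous with respect to $m\times m$ on all of $D\times D$, diagonal included, and $N$ is already $(m\times m)$-null by part~(a).
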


Let $G(x, dy)$ be the potential kernel of $X(t)$:
\formula[eq:g]{
 G(x, A) & = \ex^x \int_0^\zeta \ind_A(X_t) dt ,
}
and denote
\formula[eq:g:norm]{
 \|G\| & = \iint\limits_{D \times D} G(x, dy) m(dx) .
}
We say that the Hunt process $X(t)$ is \emph{irreducible} if for every Borel set $A \subseteq D$ such that $m(A) > 0$ we have $G(x, A) > 0$ for almost every $x \in D$. By Theorems~6 and~29 in~\cite{schilling}, irreducibility is equivalent to the following property: for every $t > 0$ and every Borel set $A \subseteq D$ such that $m(A) > 0$ we have $\pr^x(X(t) \in A) > 0$ for almost every $x \in E$; for further equivalent definitions, we refer to~\cite{schilling}. In addition to Theorem~\ref{thm:main}, we prove the following result on ergodicity of Fleming--Viot systems of two particles.

\begin{theorem}
\label{thm:ergodic}
Consider the Fleming--Viot system of two particles $(\bar X_1(t), \bar X_2(t))$ evolving according to a self-dual Hunt process $X(t)$ on a state space $D$ with a finite reference measure~$m$. If $\|G\|$ is finite (or, more generally, if the measure $G(x, dy) m(dx)$ is $\sigma$-finite), then $G(x, dy) m(dx)$ is an invariant measure for $(\bar X_1(t), \bar X_2(t))$.

If $\|G\|$ is finite and additionally $X(t)$ is irreducible, then, for every nonnegative Borel function $\ph$ on $D \times D$ and for almost every initial configuration of the two particles (with respect to either the product measure $m \times m$ or the measure~$m$ on the diagonal of $D \times D$, as in Theorem~\ref{thm:main}), the ergodic averages of $\ph(\bar X_1(t), \bar X_2(t))$ converge with probability one:
\formula{
 \lim_{T \to \infty} \frac{1}{T} \int_0^T \ph(\bar X_1(t), \bar X_2(t)) dt & = \frac{1}{\|G\|} \iint\limits_{D \times D} \ph(x, y) G(x, dy) m(dx) .
}
\end{theorem}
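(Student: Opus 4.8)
The plan is to realise $G(x,dy)\,m(dx)$, up to the factor $\tfrac12$, as the expected occupation measure of the pair $(\bar X_1(t),\bar X_2(t))$ over a single inter-branching interval, issued from the reference measure placed on the diagonal, and then to exploit the regenerative structure of the system at the branching times. Write $(p_s)_{s\ge 0}$ for the transition sub-semigroup of $X$, which by self-duality consists of self-adjoint operators on $L^2(m)$. Up to the first branching time $\tau_1$ the pair $(\bar X_1(t),\bar X_2(t))$ is exactly the product process $(X^{(1)},X^{(2)})$ of two independent copies of $X$, killed at $\tau_1=\zeta_1\wedge\zeta_2$; hence, for a Borel set $A\subseteq D\times D$ and $z\in D$,
\[
 \ex^{(z,z)}\Bigl[\int_0^{\tau_1}\ind_A(\bar X_1(t),\bar X_2(t))\,dt\Bigr]=\int_0^\infty\iint_A p_s(z,dx_1)\,p_s(z,dx_2)\,ds .
\]
Self-duality of $(p_s)$ gives $\int_D p_s(z,dx_1)\,p_s(z,dx_2)\,m(dz)=p_{2s}(x_1,dx_2)\,m(dx_1)$ — test against $f(x_1)g(x_2)$ and note that both sides equal $\langle f,p_{2s}g\rangle_{L^2(m)}$ — so, integrating the display in $m(dz)$ and substituting $u=2s$,
\[
 \int_D m(dz)\,\ex^{(z,z)}\Bigl[\int_0^{\tau_1}\ind_A\,dt\Bigr]=\tfrac12\iint_A G(x,dy)\,m(dx)=:\mu(A).
\]
In particular $\mu$ has total mass $\tfrac12\|G\|$; and since $G(z,D)=\ex^z[\zeta]$, the $\sigma$-finiteness of $G(x,dy)m(dx)$ (in particular $\|G\|<\infty$) forces $\ex^z[\zeta]<\infty$, hence $\zeta<\infty$, for $m$-almost every $z$ — exactly what makes the argument below work.

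Let $\mathcal Q$ denote the one-step kernel of the embedded chain $Z_n$ of post-branching positions, regarded as living on $D$ through the diagonal. Since the surviving particle is observed at the lifetime of the other one, which is independent of it, $\mathcal Q(z,\cdot)=2\int_{(0,\infty)}p_s(z,\cdot)\,\pr^z(\zeta\in ds)$; writing $\pr^z(\zeta\in ds)=-d_s p_s(z,D)$, integrating by parts in $s$, and using self-duality again in the form $\langle p_s f,p_t g\rangle=\langle f,p_{s+t}g\rangle$, one obtains $m\mathcal Q(dw)=\pr^w(\zeta<\infty)\,m(dw)$ — the stationarity $m\mathcal Q=m$ announced in the abstract, valid under our hypothesis. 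I would then prove that $\mu$ is invariant for the Fleming--Viot semigroup $P_t$ by the standard semi-regenerative computation: express $\int P_t\ind_A\,d\mu$ through the second display, apply the Markov property at a time $u\in[0,\tau_1)$ to rewrite it as $\int_D m(dz)\,\ex^{(z,z)}[\int_t^{t+\tau_1}\ind_A(\bar X_1(u),\bar X_2(u))\,du]$, split $\int_t^{t+\tau_1}=\int_0^{\tau_1}-\int_0^t+\int_{\tau_1}^{\tau_1+t}$, and use the strong Markov property at $\tau_1$ together with $m\mathcal Q=m$ to cancel the two correction terms, leaving $\mu(A)$. The hypothesis is used here only to make $\mu$ $\sigma$-finite, hence to legitimise the Fubini steps, so the "more generally" version is covered as well; this gives the first assertion.

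For the ergodic statement, assume moreover $\|G\|<\infty$ and $X$ irreducible, so that $\mu$ is a finite invariant measure and $m$ a finite invariant measure for $\mathcal Q$. The characterisation of irreducibility quoted in the introduction gives $\mathcal Q(z,A)=2\int_{(0,\infty)}p_s(z,A)\,\pr^z(\zeta\in ds)>0$ for $m$-almost every $z$ whenever $m(A)>0$, so $Z_n$ is positive Harris recurrent with stationary law $m/m(D)$. Applying the ergodic theorem for Harris chains to the Markov chain of successive inter-branching cycles — concretely to the cycle reward $\int_0^{\tau_1}\ph(\bar X_1,\bar X_2)\,dt$ and the cycle length $\tau_1$, whose expectations under a start distributed as $m/m(D)$ on the diagonal equal $\tfrac{1}{2m(D)}\iint\ph\,G(x,dy)m(dx)$ and $\tfrac{\|G\|}{2m(D)}$ by the computation above — yields $\tau_n^{-1}\int_0^{\tau_n}\ph(\bar X_1,\bar X_2)\,dt\to\|G\|^{-1}\iint\ph\,G(x,dy)m(dx)$ along the branching times, and a routine sandwiching between consecutive $\tau_n$ (valid since $\tau_n\to\infty$) together with monotone approximation from bounded to arbitrary nonnegative $\ph$ completes the proof. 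The restriction to almost every initial configuration, in whichever of the two senses of Theorem~\ref{thm:main} holds, is used both to invoke Theorem~\ref{thm:main} and conclude that extinction never happens — so that $\int_0^T$ is meaningful for all $T$ — and to guarantee, via $\int_D\pr^z(\zeta=\infty)\,m(dz)=0$, that a start distributed as $m\times m$ or as $m$ on the diagonal produces an embedded chain which is marginally $m$-distributed at every step, so that all $\tau_n$ are finite and $Z_n$ really does enter its recurrent regime.

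The short parts are the two self-duality computations and the semi-regenerative identity. Care is needed, first, in the bookkeeping at the first branching time for an off-diagonal start and on the degenerate event that a particle never dies — exactly where the finiteness hypothesis and the dichotomy of Theorem~\ref{thm:main} are genuinely used — and, secondly and more seriously, in upgrading the discrete ergodic theorem for the cycle chain to almost-sure (rather than merely $L^1$ or in-probability) convergence of the continuous-time averages: one must verify that $\mathcal Q$ is Harris recurrent, not just $m$-irreducible, and that the interpolation error between the $\tau_n$ stays negligible. I expect this last point to be the main obstacle.
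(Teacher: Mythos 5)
Your architecture is sound and, in its first half, close to the paper's: you identify $\tfrac12\,G(x,dy)\,m(dx)$ with the expected occupation measure of one inter-branching cycle started from the diagonal measure $\diag$ (this is Lemma~\ref{lem:reduction} in the form~\eqref{eq:reduction:sigma} with $\lambda=0$), and you obtain invariance by splitting $\int_t^{t+\tau_1}=\int_0^{\tau_1}-\int_0^t+\int_{\tau_1}^{\tau_1+t}$ and cancelling the two correction terms via the strong Markov property at $\tau_1$ and the stationarity of $m$ for the embedded chain. The paper performs the same cancellation in Laplace-transform form (Lemma~\ref{lem:stationary}: weight by $e^{-\lambda t}$, conclude that the Laplace transform of $\Phi(t)=\bar\ex^\mu f(\bar X(t))g(\bar Y(t))$ equals $\Phi(0)/\lambda$, then invert using right-continuity for continuous $f,g$); your direct-in-time version is legitimate and avoids the inversion step. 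For the ergodic assertion you take a genuinely different route: embedded Harris chain, renewal--reward identity for cycle rewards and cycle lengths, and sandwiching between consecutive $\tau_n$, whereas the paper applies the continuous-time Birkhoff theorem (Corollary~25.9 in Kallenberg) to the stationary process under $\bar\pr^\mu$ and proves triviality of the shift-invariant $\sigma$-field from irreducibility via Fukushima's lemma. The paper's route buys almost-sure convergence with no interpolation between the $\tau_n$ and no need to establish ergodicity of the sequence of cycles viewed as a chain on path space; your route is more hands-on, and the points you flag yourself (Harris recurrence versus $\psi$-irreducibility, and the cycle-chain ergodic theorem) are exactly where extra work is needed --- $\psi$-irreducibility plus a finite invariant measure does suffice, but writing that out is not shorter than the paper's Fukushima argument.

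The one step you genuinely underestimate is $m\mathcal Q=m$. Your derivation writes $\pr^z(\zeta\in ds)=-d_s p_s(z,D)$ and ``integrates by parts'' to move a $p_s$ across the inner product; done honestly this is the exit-law computation $\langle P_s f,\ell_s\rangle=\langle f,\ell_{2s}\rangle$, which presupposes that $\pr^z(\zeta\in dt)=\ell_t(z)\,dt$ with $P_s\ell_t=\ell_{t+s}$ for $m$-a.e.\ $z$. The paper explicitly notes (in the remark following Lemma~\ref{lem:dynkin}) that the existence of such an exit law is itself a nontrivial fact requiring an approximation argument, and for that reason proves the identity instead by applying Lemma~\ref{lem:reduction} to $f$ and $g=\mu\ind-\mu^2 U_\mu\ind$ and letting $\mu\to\infty$; this resolvent approximation is the technical heart of the paper, and your proposal compresses it to one sentence. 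If you take Corollary~\ref{cor:embedded} as given, your proof closes; if you intend to re-derive it, you must supply either the resolvent approximation or a proof that the exit law exists.
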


The above theorem is closely related to the following result on the embedded Markov chain of branching positions.

\begin{corollary}
\label{cor:embedded}
Consider the Fleming--Viot system of two particles evolving according to a self-dual Hunt process $X(t)$ on a state space $D$ with a finite reference measure~$m$, and assume that the branching times are finite with probability one. Let $Z_n$ denote the position of the surviving particle at the $n$th branching time $\tau_n$. Then $Z_n$ is a conservative Markov chain, and $m$ is a stationary measure for $Z_n$.
\end{corollary}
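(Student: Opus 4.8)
The plan is to realize $(Z_n)$ directly as a time-homogeneous Markov chain, to read off its one-step kernel, and then to reduce the stationarity of $m$ to a short self-adjointness computation for the killed semigroup of $X$ on $L^2(m)$; the connection with Theorem~\ref{thm:ergodic} will serve only as a consistency check.

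First, by the strong Markov property of the Fleming--Viot system at the branching time $\tau_n$ and the fact that at $\tau_n$ both particles sit at the common point $Z_n$, the evolution after $\tau_n$ depends on the past only through $Z_n$. Equivalently, $(Z_n)$ can be constructed recursively: given $Z_n = x$, run two independent copies $X^{(1)}, X^{(2)}$ of $X$ from $x$, with lifetimes $\zeta_1, \zeta_2$, let one of them die first at time $\min\{\zeta_1, \zeta_2\}$, and let $Z_{n+1}$ be the position of the surviving particle at that time. Thus $(Z_n)$ is a time-homogeneous Markov chain, and --- since the hypothesis that the branching times are finite a.s.\ guarantees that at each step exactly one particle dies while the other stays in $D$ --- it is conservative. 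Conditioning on the lifetime of the particle that dies and using the symmetry between the two particles, its transition kernel is
\formula{
 P(x, A) &= 2 \int_{[0, \infty)} p_t(x, A)\, \pr^x(\zeta \in dt) ,
}
where $p_t(x, dy) = \pr^x(X(t) \in dy,\ t < \zeta)$ is the sub-Markovian transition kernel of $X$ killed at $\zeta$.

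It remains to check $\int_D (Pf)\, dm = \int_D f\, dm$ for every bounded Borel $f \ge 0$. Since $m$ is finite, constants and bounded functions lie in $L^2(m)$; there $(p_t)$ is a self-adjoint $C_0$-semigroup with generator $\mathcal L$, and $q_t := p_t \ind_D$ satisfies $q_t(x) = \pr^x(\zeta > t)$, so $\pr^x(\zeta \in dt) = -d_t q_t(x)$. For each Borel $A$ the map $t \mapsto \int_A q_t\, dm$ is nonincreasing, continuous at $0$ with value $m(A)$, and $C^1$ on $(0, \infty)$ with $\int_0^\infty \bigl| \frac{d}{dt} \int_A q_t\, dm \bigr|\, dt = m(A) - \lim_{t \to \infty} \int_A q_t\, dm \le m(A) < \infty$; hence it is absolutely continuous with density $\int_A \mathcal L q_t\, dm$, and a monotone class argument identifies the measure $\pr^x(\zeta \in dt)\, m(dx)$ on $(0, \infty) \times D$ with $(-\mathcal L q_t(x))\, dt\, m(dx)$. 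Substituting into the formula for $P$ and applying Tonelli,
\formula{
 \int_D (Pf)\, dm &= 2 \int_0^\infty \langle p_t f,\, -\mathcal L q_t \rangle_{L^2(m)}\, dt .
}
By self-adjointness of $\mathcal L$ one has $\frac{d}{dt} \langle p_t f, q_t \rangle = 2 \langle p_t f, \mathcal L q_t \rangle$, while $\langle p_t f, q_t \rangle = \langle p_t f, p_t \ind_D \rangle = \int_D p_{2t} f\, dm$; therefore $\langle p_t f, -\mathcal L q_t \rangle = -\frac12 \frac{d}{dt} \int_D p_{2t} f\, dm$, and the integral telescopes to $\int_D (Pf)\, dm = \int_D f\, dm - \lim_{t \to \infty} \int_D p_{2t} f\, dm$. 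Finally, the hypothesis that the branching times are finite a.s.\ forces $\pr^x(\zeta = \infty) = 0$ for a.e.\ $x$, so $p_t f \to 0$ a.e., and dominated convergence ($0 \le p_t f \le \|f\|_\infty \ind_D \in L^1(m)$) kills the last term. Hence $\int_D (Pf)\, dm = \int_D f\, dm$, i.e.\ $m$ is stationary for $(Z_n)$.

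I expect the only real difficulty to be in making this last computation rigorous: justifying the interchange of the spatial and temporal integrations, the absolute continuity in $t$ of $\int_A q_t\, dm$ (equivalently, the absence of a singular part in the lifetime distribution seen through $m$), and the differentiation of the semigroup near $t = 0$, where $\|\mathcal L p_t g\|_{L^2(m)}$ grows like $1/t$ --- all of which is kept under control by the finiteness of $m$ (which puts constants into $L^2(m)$ and yields the bound $m(A) < \infty$ above) together with the analyticity of the self-adjoint semigroup $(p_t)$. For the last assertion of the corollary I would only remark that, when $\|G\|$ is finite, this is consistent with Theorem~\ref{thm:ergodic}: invariance of $G(x, dy)\, m(dx)$ for the continuous-time system yields, by a Markov renewal argument at the branching times, that $m$ is stationary for the embedded chain --- but the argument above needs only the weaker hypothesis that the branching times are finite a.s.
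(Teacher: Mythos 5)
Your argument is correct in substance, but it takes a genuinely different route from the paper. The paper never writes down the one-step kernel of $(Z_n)$: its proof of Corollary~\ref{cor:embedded} is a short induction that reduces everything to the identity $\ex^\diag f(Z) = \int_D f\,dm$ (formula~\eqref{eq:invariant}), which is in turn extracted from Lemma~\ref{lem:dynkin}; that lemma is proved via the bivariate resolvent reduction of Lemma~\ref{lem:reduction} applied to $f$ and the approximation $g_\mu = \mu\ind - \mu^2 U_\mu \ind$ of the (non-existent) function $L\ind$, precisely to avoid domain and differentiability issues. You instead compute the kernel $P(x,dy) = 2\int_{[0,\infty)} p_t(x,dy)\,\pr^x(\zeta\in dt)$ directly and verify $\int_D Pf\,dm = \int_D f\,dm$ by identifying $\pr^x(\zeta\in dt)\,m(dx)$ with $(-\mathcal L q_t(x))\,dt\,m(dx)$ and integrating $\tfrac{d}{dt}\langle p_t f, q_t\rangle = 2\langle p_t f, \mathcal L q_t\rangle$ in time. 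This is essentially the referee's ``exit law'' argument recorded in the remark after Lemma~\ref{lem:dynkin} (your $-\mathcal L q_t$ is the exit law $\ell_t$, and your telescoping identity is $\langle p_t f, \ell_t\rangle = \langle f, \ell_{2t}\rangle$); the paper explicitly declines that route because the existence of the exit law requires justification, and your absolute-continuity argument for $t\mapsto\int_A q_t\,dm$ (monotonicity, $C^1$ on $(0,\infty)$ by analyticity of the self-adjoint semigroup, integrable derivative, continuity at $0$) is exactly the missing justification. What your approach buys is an explicit transition kernel for the embedded chain and independence from Lemmas~\ref{lem:reduction}--\ref{lem:dynkin}; what it costs is the semigroup-differentiation technicalities you flag, which the paper's resolvent trick sidesteps entirely.

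One point is glossed over: conservativeness of $(Z_n)$ does not follow merely from finiteness of the branching times, because a priori both particles could die simultaneously ($\zeta_1 = \zeta_2 < \infty$), in which case $Z_{n+1} = \partial$ even though $\tau_{n+1} < \infty$. The paper rules this out in~\eqref{eq:zero} using real-analyticity of $(s,t)\mapsto\langle P_s\ind, P_t\ind\rangle$. In your setup the issue also resolves itself a posteriori: your kernel $P(x,\cdot)$ only charges the event of a strict first death, so the stationarity computation with $f = \ind$ gives $\int_D P(x,D)\,m(dx) = m(D)$, whence $P(x,D) = 1$ for $m$-a.e.\ $x$ and the chain is conservative --- but you should say this rather than assert that exactly one particle dies at each step.
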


\begin{remark}
Let us discuss the assumptions in Theorem~\ref{thm:ergodic} and Corollary~\ref{cor:embedded}. By definition, $\|G\| = \int_D \ex^x \, \zeta m(dx)$. Therefore, if $\|G\|$ is finite, then $\pr^x(\zeta < \infty) = 1$ for almost every $x \in D$, and hence the branching times $\tau_n$ are finite with probability one for almost every initial configuration (with respect to both the product measure $m \times m$ and the measure on the diagonal with marginals $m$). Converse implication need not be true, but if $\pr^x(\zeta < \infty) = 1$ for almost every $x \in D$, then $G(x, dy) m(dx)$ is a $\sigma$-finite measure; see, for example, items~(iv) and~(v) of Proposition~(2.2) in~\cite{getoor}.

Let us define $D_0 = \{x \in D : \pr^x(\zeta < \infty) = 1\}$. As it was kindly pointed out by the referee, for a self-dual Hunt process $X(t)$ on a state space $D$ with a finite reference measure $m$, it can be proved that $D_0$ is an invariant set, and $\pr^x(\zeta = \infty) = 1$ for almost every $x \in D \setminus D_0$ (see Remark~\ref{rem:dissipative}). From the point of view of Fleming--Viot systems of particles, we can safely ignore $D \setminus D_0$, or, in other words, we may restrict our attention to the case when $\pr^x(\zeta < \infty) = 1$ for almost every $x \in D$. Thus, the assumptions in the first part of Theorem~\ref{thm:ergodic} and in Corollary~\ref{cor:embedded} are quite natural and not restrictive. We refer to~\cite{getoor-book} for further discussion.
\end{remark}

The remaining part of the paper is divided into four sections. In Section~\ref{sec:idea} we illustrate the main idea of the proof when $X(t)$ is the standard Brownian motion in a smooth domain in $\R^d$. In Section~\ref{sec:bivariate} we consider two independent copies of the underlying Hunt process $X(t)$. This part contains two auxiliary lemmas, which describe the location of the surviving particle when the other one dies. Section~\ref{sec:fv} contains the proof of Theorem~\ref{thm:main}, while in Section~\ref{sec:ergodic} we prove Theorem~\ref{thm:ergodic}.

%
%

\section{\uppercase{Idea of the proof}}
\label{sec:idea}

Let us consider a Fleming--Viot system of two particles, that from now on we denote by $(\bar X(t), \bar Y(t))$ rather than $(\bar X_1(t), \bar X_2(t))$. In this section we assume that $\bar X(t)$ and $\bar Y(t)$ evolve as independent Brownian motions in a smooth, bounded Euclidean domain $D$, and whenever one of the particles hits the boundary, it immediately jumps to the position of the other particle. The key idea of our proof lies in the following observation. Its refined variant, Lemma~\ref{lem:dynkin}, is proved rigorously in Section~\ref{sec:bivariate}.

\begin{proposition}
\label{prop:dynkin}
If the Fleming--Viot pair of particles is started at a random point distributed uniformly over the diagonal of $D \times D$, then at the first branching time the particles are again distributed uniformly on the diagonal of $D \times D$.
\end{proposition}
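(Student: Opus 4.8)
The plan is to reduce Proposition~\ref{prop:dynkin} to the symmetry of the transition density of Brownian motion killed on leaving $D$ — that is, to the self-duality of Brownian motion with respect to Lebesgue measure. Write $\xi$ for the common starting position, so $\bar X(0) = \bar Y(0) = \xi$ with $\xi$ uniform on $D$; conditionally on $\xi$, the two particles are independent Brownian motions from $\xi$ up to the first branching time $\tau_1 = \tau_{\bar X} \wedge \tau_{\bar Y}$, where $\tau_{\bar X}$ and $\tau_{\bar Y}$ are the exit times of $\bar X$ and $\bar Y$ from $D$. At $\tau_1$ the dying particle jumps onto the surviving one, so both particles end up at a single point $Z \in D$ (with $\tau_{\bar X} = \tau_{\bar Y}$ having probability zero). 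Thus it suffices to show that $Z$ is uniformly distributed on $D$. Since $(\bar X, \bar Y)$ is exchangeable given $\xi$, for every Borel set $A \subseteq D$ we have $\pr(Z \in A) = 2\,\pr(\bar Y(\tau_{\bar X}) \in A,\ \tau_{\bar X} < \tau_{\bar Y})$, so everything reduces to computing this probability.

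The computation goes as follows. Conditioning on $\xi = x$ and on the value $t$ of $\tau_{\bar X}$, writing $\rho^x(t)\,dt$ for the law of the exit time from $D$ of a Brownian motion started at $x$ and $p_t(x, y)$ for the transition density of Brownian motion killed on leaving $D$, independence of $\bar X$ and $\bar Y$ yields
\formula{
 \pr(\bar Y(\tau_{\bar X}) \in A,\ \tau_{\bar X} < \tau_{\bar Y})
 & = \frac{1}{|D|} \int_D \int_0^\infty \rho^x(t) \int_A p_t(x, y) \, dy \, dt \, dx .
}
I would then interchange the order of integration, use the symmetry $p_t(x, y) = p_t(y, x)$ to turn the inner integral into $\int_D p_t(y, x)\,\rho^x(t)\,dx$, and identify this, via the Markov property at time $t$ (the convolution identity $\rho^y(t + u) = \ex^y[\rho^{\bar Y(t)}(u);\, \tau_{\bar Y} > t]$ with $u = t$), with the exit-time density $\rho^y(2t)$. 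Integrating in $t$ and substituting $s = 2t$ gives $\int_0^\infty \rho^y(2t)\,dt = \tfrac12\,\pr^y(\tau_{\bar Y} < \infty) = \tfrac12$, because Brownian motion started inside the bounded domain $D$ exits in finite time almost surely. Hence $\pr(\bar Y(\tau_{\bar X}) \in A,\ \tau_{\bar X} < \tau_{\bar Y}) = |A|/(2|D|)$, so $\pr(Z \in A) = |A|/|D|$, which is the claim.

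The genuine obstacle is not this calculation but its rigorous justification for a general self-dual Hunt process: there is then typically no exit-time density $\rho^x(t)$ and no heat kernel $p_t(x, y)$, and neither the Markov-property step nor the interchanges of integrals can be performed pointwise. The way around is to replace these objects by the potential kernel $G(x, dy)$ and to recast the identity purely in terms of the $L^2(m)$-symmetry of the process and of the bivariate process on $D \times D$. This is what Section~\ref{sec:bivariate} does; its auxiliary lemmas, and in particular the refined statement Lemma~\ref{lem:dynkin}, are the rigorous counterpart of the computation above, the main technical point being the treatment of the random time at which one of the two coordinates dies.
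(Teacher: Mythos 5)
Your computation is correct in the setting of Proposition~\ref{prop:dynkin} (Brownian motion in a smooth bounded domain, where the killed heat kernel $p_t(x,y)$ and the exit-time density $\rho^x(t)$ both exist), but it is a genuinely different route from the one the paper takes to prove this proposition. The paper's Steps~1--2 are analytic: they identify the occupation density of the bivariate process started uniformly on the diagonal with $\tfrac{1}{2|D|}\,G_D$ via Green's third identity applied to $\Delta_{x_1,x_2}G_D(x_1,x_2)=-2\diag(dx_1,dx_2)$, then read off the exit distribution of $(X,Y)$ from $D\times D$ as an inward normal derivative (the Poisson kernel of the product domain) and finish with $\int_{\partial D}P_D(x,z)\,dz=1$. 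You instead condition on the exit time of the dying particle, use $p_t(x,y)=p_t(y,x)$ to move the kernel onto the survivor, and invoke the exit-law semigroup identity $\int_D p_t(y,x)\rho^x(t)\,dx=\rho^y(2t)$ followed by $\int_0^\infty\rho^y(2t)\,dt=\tfrac12$. This is precisely the referee's alternative derivation that the paper records in the remark following Lemma~\ref{lem:dynkin} (there phrased in terms of an exit law $\ell_t$ with $P_t\ell_t=\ell_{2t}$); keeping the factor $e^{-2\lambda t}$ in your integrals would give the Laplace-transformed refinement \eqref{eq:dynkin} with no extra work. Your closing diagnosis is also accurate: for a general self-dual Hunt process the existence of the exit law is the delicate point, which is why the paper's rigorous proof of Lemma~\ref{lem:dynkin} avoids it by approximating $L\ind$ with $\mu\ind-\mu^2 U_\mu\ind$ and working with resolvents. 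What your route buys is a shorter, more probabilistic argument in the smooth Brownian case; what the paper's Green-function computation buys is the explicit identification of the occupation measure with $\tfrac{1}{2|D|}G_D(x_1,x_2)\,dx_1\,dx_2$, which foreshadows the invariant measure $G(x,dy)\,m(dx)$ of Theorem~\ref{thm:ergodic}.
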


With the above result at hand, the remaining part of the proof of Theorem~\ref{thm:main} is fairly simple. The sequence $(Z_n, \sigma_n)$ of branching locations $Z_n = \bar X(\tau_n) = \bar Y(\tau_n)$ and gaps between branching times $\sigma_n = \tau_n - \tau_{n - 1}$ forms a stationary Markov chain, and hence, by the ergodic theorem, $\tau_n / n$ converges to a positive limit with probability one. In particular, $\tau_\infty = \lim_{n \to \infty} \tau_n$ is necessarily infinite, that is, there is no extinction.

We remark that our proof of Theorem~\ref{thm:main} in Section~\ref{sec:fv} uses the full statement of Lemma~\ref{lem:dynkin} rather than just Proposition~\ref{prop:dynkin}, and it is in fact even simpler than the argument sketched above.

Our proof of Proposition~\ref{prop:dynkin} (or Lemma~\ref{lem:dynkin}) in the next section uses relatively standard, but rather abstract tools from the theory of Markov processes. For this reason, we think it will be instructive to discuss first a more direct approach to Proposition~\ref{prop:dynkin} for Brownian particles in a smooth domain $D$. While the two arguments (the one that follows and the actual proof in Section~\ref{sec:bivariate}) may appear quite different, in fact both of them follow the same line: we identify the occupation density of the bivariate process with the Green function of $D$ (using an analytic approach in Step~1 below, and a probabilistic reasoning in Lemma~\ref{lem:reduction}), and then we determine the distribution at the first branching time (expressing it in terms of the Poisson kernel of $D$ in Step~2 below, and using resolvent techniques in Lemma~\ref{lem:dynkin}).

\begin{proof}[Sketch of the proof of Proposition~\ref{prop:dynkin}]
We divide the argument into three steps.

\emph{Step 1.} Let $G_D(x, y)$ denote the usual Green function in $D$. That is, we have $G_D(x, y) = G_D(y, x)$, for every $y \in D$ the function $x \mapsto G_D(x, y)$ vanishes continuously on the boundary of $D$, and we have
\formula{
 \Delta_x G_D(x, y) & = -\delta_y(dx)
}
in the sense of distributions. Here $\Delta_x$ stands for the Laplace operator in $\R^d$ acting on the variable $x$, and $\delta_y$ is the Dirac delta at $y$. By symmetry,
\formula{
 \Delta_y G_D(x, y) & = -\delta_x(dy) .
}
It follows that, as a bivariate function, the Green function satisfies
\formula{
 \Delta_{x_1, x_2} G_D(x_1, x_2) & = -2 \diag(dx_1, dx_2) .
}
Here and below we denote by $\diag$ the uniform measure on the diagonal of $D \times D$:
\formula{
 \diag(dx, dy) & = \delta_x(dy) dx = \delta_y(dx) dy .
}
Since $G_D(x_1, x_2)$ converges to zero on the boundary of $D \times D$, Green's third identity implies that
\formula{
 G_D(x_1, x_2) & = 2 \iint\limits_{D \times D} G_{D \times D}((x_1, x_2), (y_1, y_2)) \diag(dy_1, dy_2) \\
 & = 2 \int_D G_{D \times D}((x_1, x_2), (y, y)) dy .
}
Here $G_{D \times D}((x_1, x_2), (y_1, y_2))$ denotes the Green function in $D \times D$.

\emph{Step 2.} Consider the $2d$-dimensional Brownian motion $(X(t), Y(t))$ in $D \times D$, absorbed at the boundary, and denote by $\tau$ the hitting time of the boundary. By Kakutani's formula for the solution of the Dirichlet problem, if $(X(0), Y(0)) = (x_1, x_2)$, then the density function of the distribution of $(X(\tau), Y(\tau))$ on the boundary of $D \times D$ is the Poisson kernel $P_{D \times D}((x_1, x_2), (z_1, z_2))$. Recall that the Poisson kernel is the inward normal derivative of the Green function (with respect to the second variable) on the boundary $\partial(D \times D) = (\partial D \times D) \cup (D \times \partial D)$:
\formula{
 P_{D \times D}((x_1, x_2), (z_1, z_2)) & = \begin{cases} \dfrac{\partial}{\partial \nu} \biggr|_{z_1} G_{D \times D}((x_1, x_2), (\cdot, z_2)) & \text{for $(z_1, z_2) \in \partial D \times D$,} \\[1em] \dfrac{\partial}{\partial \nu} \biggr|_{z_2} G_{D \times D}((x_1, x_2), (z_1, \cdot)) & \text{for $(z_1, z_2) \in D \times \partial D$,} \end{cases}
}
Here $\tfrac{\partial}{\partial \nu}\rvert_z$ denotes the inward normal derivative on the boundary of $D$ evaluated at a point $z \in \partial D$.

It follows that if the initial position $(X(0), Y(0))$ is uniformly distributed over the diagonal of $D \times D$, then the density function of the distribution of $(X(\tau), Y(\tau))$ is given by
\formula{
 P(z_1, z_2) & = \frac{1}{|D|} \int_D P_{D \times D}((x, x), (z_1, z_2)) dx ,
}
and hence it is the inward normal derivative on the boundary of $D \times D$ of
\formula{
 \frac{1}{|D|} \int_D G_{D \times D}((x, x), (y_1, y_2)) dx & = \frac{1}{2 |D|} \, G_D(y_1, y_2) .
}
In other words,
\formula{
 P(z_1, z_2) & = \begin{cases} \dfrac{\partial}{\partial \nu} \biggr|_{z_1} \dfrac{G_D(\cdot, z_2)}{2 |D|} & \text{for $(z_1, z_2) \in \partial D \times D$,} \\[1em] \dfrac{\partial}{\partial \nu} \biggr|_{z_2} \dfrac{G_D(z_1, \cdot)}{2 |D|} & \text{for $(z_1, z_2) \in D \times \partial D$.} \end{cases}
}
But the inward normal derivative (with respect to $y$) of the Green function in $D$, $G_D(x, y)$, is the Poisson kernel in $D$, $P_D(x, z)$. Thus,
\formula{
 P(z_1, z_2) & = \begin{cases} \dfrac{P_D(z_1, z_2)}{2 |D|} & \text{for $(z_1, z_2) \in D \times \partial D$,} \\[1em] \dfrac{P_D(z_2, z_1)}{2 |D|} & \text{for $(z_1, z_2) \in \partial D \times D$.} \end{cases}
}
Suppose that $Z = X(\tau)$ if $Y(\tau) \in \partial D$, and $Z = Y(\tau)$ if $X(\tau) \in \partial D$. What we have found above implies that for every Borel set $A \subseteq D$,
\formula{
 \pr(Z \in A) & = \pr(X(\tau) \in A , \, Y(\tau) \in \partial D) + \pr(X(\tau) \in \partial D , \, Y(\tau) \in A) \\
 & = \iint\limits_{A \times \partial D} P(z_1, z_2) dz_1 dz_2 + \iint\limits_{\partial D \times A} P(z_1, z_2) dz_1 dz_2 \\
 & = \frac{1}{2 |D|} \iint\limits_{A \times \partial D} P_D(z_1, z_2) dz_1 dz_2 + \frac{1}{2 |D|} \iint\limits_{\partial D \times A} P_D(z_2, z_1) dz_1 dz_2 .
}
However, the integral of $P_D(x, y)$ over $y \in \partial D$ is equal to one, and so
\formula{
 \pr(Z \in A) & = \frac{1}{2 |Z|} \int_A 1 dz_1 + \frac{1}{2 |D|} \int_A 1 dz_2 = \frac{|A|}{|D|} \, ,
}
that is, $Z$ is uniformly distributed over $D$.

\emph{Step 3.} Let us now consider the Fleming--Viot pair $\bar X(t)$ and $\bar Y(t)$ evolving as independent Brownian motions in $D$, but whenever either particle hits the boundary of $D$, it is immediately resurrected at the location of the other particle. We suppose that $\bar X(0) = \bar Y(0)$ is uniformly distributed over $D$, and we claim that in this case also the first branching location $Z_1 = \bar X(\tau_1) = \bar Y(\tau_1)$ is uniformly distributed over~$D$.

Observe that up to time $\tau_1$, $(\bar X(t), \bar Y(t))$ is the Brownian motion in $D \times D$, and $Z_1 = \bar X(\tau_1) = \bar Y(\tau_1)$ is equal to the left limit $\bar X(\tau_1-) = \bar X(\tau_1)$ if $\bar Y(\tau_1-) \in \partial D$, and to the left limit $\bar Y(\tau_1-) = \bar Y(\tau_1)$ if $\bar X(\tau_1-) \in \partial D$. Therefore, $Z_1$ is the same random variable as the variable $Z$ studied in the previous step. This completes the proof: we have already shown that $Z$ is uniformly distributed over $D$.
\end{proof}

%
%

\section{\uppercase{Where were you}}
\label{sec:bivariate}

In this section we prove an auxiliary result, which describes the distribution of the position of the Hunt process $X(t)$ at the lifetime of its independent copy. This turns out to be the key ingredient of the proof of Theorem~\ref{thm:main} in the next section.

Suppose that $X(t)$ is a self-dual Hunt processes with state space $D \cup \{\partial\}$, where $\partial$ is a cemetery point, and a finite reference measure $m$. By $\mathscr F_t$ we denote the natural filtration of $X(t)$. For notational convenience, with no loss of generality we assume that $m$ is a probability measure. As it is customary, we adopt the convention that $X(\infty) = \partial$, and that if $f$ is a function on $D$, then we automatically extend $f$ to $D \cup \{\partial\}$ so that $f(\partial) = 0$. We denote by $\ind$ the constant one on $D$; note, however, that $\ind(\partial) = 0$. We also write $\langle f, g \rangle$ for the inner product of $f, g \in L^2(m)$. Finally, we denote by $\diag$ the measure concentrated on the diagonal of $D \times D$ with marginals $m$,
\formula{
 \diag(dx, dy) & = \delta_x(dy) m(dx) = \delta_y(dx) m(dy) .
}

We write $\pr^x$ for the probability corresponding to the process $X(t)$ started at $X(0) = x$. We denote by $\zeta$ the lifetime of $X(t)$, by
\formula{
 p_t(x, A) & = \pr^x(X(t) \in A)
}
the transition kernel of $X(t)$, and by
\formula{
 P_t f(x) & = \ex^x f(X(t)) = \int_D f(y) p_t(x, dy)
}
the transition operators $P_t$ of the process $X(t)$, defined whenever the expectation or the integral is well-defined. Then $P_t$ form a strongly continuous semigroup of self-adjoint contractions on $L^2(D, m)$. We also define the resolvent kernel
\formula{
 u_\lambda(x, A) & = \int_0^\infty e^{-\lambda t} p_t(x, A) dt = \ex^x \int_0^\zeta e^{-\lambda t} \ind_A(X(t)) dt ,
}
and the resolvent operators
\formula{
 U_\lambda f(x) & = \int_0^\infty e^{-\lambda t} P_t f(x) dt = \ex^x \int_0^\zeta e^{-\lambda t} f(X(t)) dt ,
}
defined whenever $\lambda \ge 0$ and the double integral on the right-hand side is well-defined. Note that for $\lambda = 0$ we recover the potential kernel $G(x, dy) = u_0(x, dy)$ discussed in the introduction. For further information about Hunt processes and their transition and resolvent operators, we refer to~\cite{cw}.

Suppose that $Y(t)$ an independent copy of $X(t)$. Let us write $\pr^{x, y}$ for the probability corresponding to processes $X(t)$ and $Y(t)$ started at $X(0) = x$ and $Y(0) = y$, and $\pr^\diag$ for the probability corresponding to processes $X(t)$ and $Y(t)$ started at a random point $X(0) = Y(0)$ with distribution $m$:
\formula{
 \ex^\diag \ph(X(t), Y(t)) & = \int_D \ex^{x, x} \ph(X(t), Y(t)) m(dx) .
}
We stress that under $\pr^\diag$, the processes $X(t)$ and $Y(t)$ are not independent. Denote by $\zeta^X$, $\zeta^Y$ the lifetimes of $X(t)$ and $Y(t)$, respectively. We view the bivariate process $(X(t), Y(t))$ as a Hunt process on state space $(D \cup \{\partial\}) \times (D \cup \{\partial\})$, with lifetime $\max\{\zeta^X, \zeta^Y\}$, and cemetery state $(\partial, \partial)$. Clearly, for every Borel sets $A, B \subseteq D \cup \{\partial\}$, for every Borel functions $f, g$ on $D \cup \{\partial\}$, and for every $x, y \in D$,
\formula*{
 \pr^{x, y} \bigl(X(t) \in A, \, Y(t) \in B\bigr) = p_t(x, A) p_t(y, B) , \\
 \ex^{x, y} \bigl(f(X(t)) g(Y(t))\bigr) = P_t f(x) P_t f(y) .
}
Here we abuse the notation by setting $p_t(x, \{\partial\}) = 1 - p_t(x, D)$.

Symmetry (or self-duality) of $X(t)$ allows us to link the resolvent kernels of the bivariate process $(X(t), Y(t))$ and the original process $X(t)$.

\begin{lemma}
\label{lem:reduction}
For bounded Borel functions $f$ and $g$ and $\lambda > 0$, or for nonnegative Borel functions $f$ and $g$ and $\lambda \ge 0$, we have
\formula[eq:reduction]{
 \ex^\diag \int_0^\infty e^{-2 \lambda t} f(X(t)) g(Y(t)) dt & = \frac{1}{2} \, \langle f, U_\lambda g \rangle .
}
\end{lemma}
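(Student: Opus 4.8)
The plan is to unfold the definition of $\ex^\diag$, exploit the independence of $X(t)$ and $Y(t)$ under $\pr^{x,x}$ to factor the integrand, and then use the semigroup property together with self-duality to collapse the product $P_t f \cdot P_t g$ into a single application of $P_{2t}$; a change of variables $s = 2t$ then produces the resolvent $U_\lambda$ and the factor $\tfrac12$.

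Concretely, I would first write the left-hand side of \eqref{eq:reduction} as
$\int_D \ex^{x,x}\bigl[\int_0^\infty e^{-2\lambda t} f(X(t)) g(Y(t))\, dt\bigr] m(dx)$.
Since $X(t)$ and $Y(t)$ are independent under $\pr^{x,x}$, an application of Tonelli's theorem (when $f,g \ge 0$ and $\lambda \ge 0$) or of Fubini's theorem (when $f,g$ are bounded and $\lambda > 0$, using that $m$ is a probability measure and the factor $e^{-2\lambda t}$ guarantees absolute integrability) lets me move the expectation inside the time integral and split it as a product, giving $\int_D \int_0^\infty e^{-2\lambda t} (P_t f)(x)\,(P_t g)(x)\, dt\, m(dx)$. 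Interchanging the $dt$ and $m(dx)$ integrations, this equals $\int_0^\infty e^{-2\lambda t} \langle P_t f, P_t g \rangle\, dt$.

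Next I would invoke the two structural properties of $(P_t)$ recorded in the excerpt: $(P_t)$ is a semigroup, so $P_t P_t = P_{2t}$, and each $P_t$ is self-adjoint on $L^2(m)$. As $m$ is finite, bounded Borel functions belong to $L^2(m)$ and $P_t$ maps them to $L^2(m)$ functions, so $\langle P_t f, P_t g \rangle = \langle f, P_t P_t g \rangle = \langle f, P_{2t} g \rangle$. Substituting and changing variables $s = 2t$ gives $\tfrac12 \int_0^\infty e^{-\lambda s} \langle f, P_s g \rangle\, ds$, and one final application of Fubini/Tonelli (to pull $\langle f, \cdot \rangle$ out of the $s$-integral, using the definition of $U_\lambda g$ as $\int_0^\infty e^{-\lambda s} P_s g\, ds$) identifies this with $\tfrac12 \langle f, U_\lambda g \rangle$. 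For nonnegative but possibly unbounded $f$ and $g$ with $\lambda \ge 0$, I would then pass to the limit by truncation, applying the identity to $f \wedge n$ and $g \wedge n$ and letting $n \to \infty$; every quantity appearing is nonnegative, so monotone convergence applies on each side and both sides increase to their respective (possibly infinite) values.

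The computation itself is short; the only real obstacle is the bookkeeping of the interchanges of integration. For $\lambda > 0$ and bounded $f,g$ one must check absolute integrability at every line — here finiteness of $m$, the contraction property of $P_t$ on $L^\infty$, and the exponential weight $e^{-2\lambda t}$ suffice — and in the $\lambda = 0$, nonnegative case one must be careful to apply the monotone-convergence / Tonelli argument consistently throughout, so that the identity remains valid even when both sides equal $+\infty$. Everything else is a direct manipulation of the semigroup and inner product.
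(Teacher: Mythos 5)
Your proposal is correct and follows essentially the same route as the paper: unfold $\ex^\diag$, use independence to factor the integrand into $P_t f \cdot P_t g$, then apply self-adjointness and the semigroup property to obtain $\langle f, P_{2t} g\rangle$, and integrate in $t$ to produce $\tfrac12 U_\lambda$. Your extra care with Tonelli versus Fubini and the truncation step for unbounded nonnegative $f,g$ are sensible refinements of the same argument.
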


Note that since we agreed that $f(\partial) = g(\partial) = 0$, we have $f(X(t)) g(Y(t)) = 0$ when $t \ge \min\{\zeta^X, \zeta^Y\}$. Thus, the integral in~\eqref{eq:reduction} is effectively over $t \in [0, \min\{\zeta^X, \zeta^Y\})$.

\begin{proof}
Clearly,
\formula{
 \ex^\diag \bigl(f(X(t)) g(Y(t))\bigr) & = \int_D \ex^{x, x} \bigl(f(X(t)) g(X(t))\bigr) m(dx) \\
 & = \int_D P_t f(x) P_t g(x) m(dx) \\
 & = \langle P_t f, P_t g \rangle .
}
Using the fact that $P_t$ is self-adjoint and the semigroup property, we find that
\formula[eq:duality]{
 \ex^\diag \bigl(f(X(t)) g(Y(t))\bigr) & = \langle f, P_t P_t g \rangle = \langle f, P_{2 t} g \rangle .
}
Thus, by Fubini's theorem,
\formula{
 \ex^\diag \biggl(\int_0^\infty e^{-2 \lambda t} f(X(t)) g(Y(t)) dt\biggr) & = \int_0^\infty \langle f, e^{-2 \lambda t} P_{2 t} g \rangle dt \\
 & = \biggl\langle f, \int_0^\infty e^{-2 \lambda t} P_{2 t} g dt \biggr\rangle \\
 & = \langle f, \tfrac{1}{2} U_\lambda g \rangle ,
}
as desired.
\end{proof}

Our next result is a refined version of Proposition~\ref{prop:dynkin}. Before we state it, we recall that if $\zeta^X \le \zeta^Y$, then we understand that $X(\zeta^Y) = \partial$ and so $f(X(\zeta^Y)) = 0$. We also agree that $e^{-\infty} = 0$.

\begin{lemma}
\label{lem:dynkin}
If $f$ is a bounded Borel function and $\lambda > 0$, then
\formula*[eq:dynkin]{
 \ex^\diag \bigl(e^{-2 \lambda \zeta^Y} f(X(\zeta^Y))\bigr) & = \frac{1}{2} \, \ex^\diag \bigl(e^{-\lambda \zeta^X} f(X(0))\bigr) \\
 & = \frac{1}{2} \int_D f(x) \ex^x e^{-\lambda \zeta^X} m(dx) .
}
When $\lambda = 0$ and $\pr^x(\zeta^X < \infty) = 1$ for almost all $x \in D$, then we have
\formula[eq:dynkin:zero]{
 \ex^\diag f(X(\zeta^Y)) & = \frac{1}{2} \int_D f(x) m(dx) .
}
\end{lemma}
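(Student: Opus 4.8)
The plan is to compute the Laplace transform $\ex^\diag(e^{-2\lambda \zeta^Y} f(X(\zeta^Y)))$ by writing $\zeta^Y$ as a first hitting time and applying a Dynkin-type (optional stopping) formula to a suitable multiplicatively-perturbed resolvent of the bivariate process, then to invoke Lemma~\ref{lem:reduction} to collapse the two-particle expression back to a single-particle resolvent. Concretely, $X(\zeta^Y)$ is the position of the first particle at the instant the second particle is killed. On the event $\{\zeta^Y \le \zeta^X\}$ this is a genuine point of $D$; on $\{\zeta^Y > \zeta^X\}$ we have $X(\zeta^Y) = \partial$ and the convention $f(\partial)=0$ makes the contribution vanish. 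The key identity to exploit is that for the bivariate Hunt process $(X(t),Y(t))$, the quantity $\int_0^\infty e^{-2\lambda t} f(X(t))\,(\text{killing rate of }Y)\,dt$ both equals the left-hand side of \eqref{eq:dynkin} (by the definition of $\zeta^Y$ as the exponential-type killing time of $Y$, i.e.\ a Lévy-system / Dynkin computation) and can be related to $\ex^\diag \int_0^\infty e^{-2\lambda t} f(X(t)) g(Y(t))\,dt$ with an appropriate choice of $g$.

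First I would make the heuristic rigorous via resolvents. Let $U_\lambda^{(2)}$ denote the resolvent of the bivariate process at rate $2\lambda$, and observe that by the strong Markov property at time $\zeta^Y$,
\formula{
 U_\lambda^{(2)}\big(f \otimes \mathbf{1}\big)(x,y) = \ex^{x,y}\int_0^{\zeta^Y} e^{-2\lambda t} f(X(t))\,dt + \ex^{x,y}\Big(e^{-2\lambda \zeta^Y}\,U_\lambda f(X(\zeta^Y))\Big),
}
since after $\zeta^Y$ only $X$ survives and its contribution is exactly $U_\lambda f$ evaluated at the surviving position (here I use that $\mathbf{1}(\partial)=0$ so no mass accrues after $\zeta^X$ either). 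Integrating this identity against $\diag$ and using Lemma~\ref{lem:reduction} with $g = \mathbf{1}$, the left-hand side becomes $\tfrac12 \langle f, U_\lambda \mathbf{1}\rangle$. The first term on the right, again by Lemma~\ref{lem:reduction} but now with the roles arranged so that the integral runs only up to $\min\{\zeta^X,\zeta^Y\}$ — which is automatic because the integrand involves $\mathbf{1}(Y(t))$ — also equals $\tfrac12\langle f, U_\lambda \mathbf 1\rangle$. Hence the last term vanishes \emph{in the wrong way}: that naive cancellation is too crude. The correct route is instead to differentiate or to split more carefully: replace $\mathbf 1$ in the second slot by $g$, run the same decomposition, and then let $g$ approach $\mathbf 1$ in a way that isolates the boundary term; equivalently, use the identity $\ex^{x,y}\int_0^{\zeta^Y} e^{-2\lambda t} h(X(t),Y(t))\,dt$ with $h = f\otimes \mathbf 1$ and compare with $h = f \otimes g$, subtracting to extract $\ex^\diag(e^{-2\lambda\zeta^Y} (U_\lambda(\mathbf 1 - g))(X(\zeta^Y)))$, and finally choose $g$ so that $U_\lambda(\mathbf 1 - g) = f$, i.e.\ $g = \mathbf 1 - (\text{something})$; by self-duality the pairing $\langle f, U_\lambda \mathbf 1\rangle = \langle U_\lambda f, \mathbf 1\rangle$, and the resolvent equation relating $U_\lambda$ and $U_{2\lambda}$-type kernels yields, after rearrangement, exactly $\tfrac12 \int_D f(x)\,\ex^x e^{-\lambda\zeta^X}\,m(dx)$, using $\ex^x e^{-\lambda\zeta^X} = 1 - \lambda U_\lambda \mathbf 1(x)$.

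The cleanest organization, which I would adopt in the final write-up, is: (i) establish the elementary identity $\ex^\diag\big(e^{-2\lambda\zeta^Y} f(X(\zeta^Y))\big) = 2\lambda\,\ex^\diag\int_0^{\infty} e^{-2\lambda t} f(X(t)) U_\lambda\mathbf 1(Y(t))\,dt$, which comes from writing $U_\lambda \mathbf 1(Y(t)) = \ex(\int_t^\infty e^{-\lambda(s-t)}\mathbf 1(Y(s))ds \mid \mathscr F_t)$ conditioned appropriately and using that $Y$ is killed at $\zeta^Y$ — more precisely, from the fact that for the killed process the ``killing density'' at time $t$ is captured by the resolvent of the survivor, a standard last-exit / Dynkin decomposition; (ii) apply Lemma~\ref{lem:reduction} with $g = U_\lambda \mathbf 1$ to get $2\lambda \cdot \tfrac12 \langle f, U_\lambda U_\lambda \mathbf 1\rangle = \lambda\langle f, U_\lambda^2 \mathbf 1\rangle$; (iii) use the resolvent identity $\lambda U_\lambda^2 = U_\lambda - (\text{the operator } \lambda \mapsto \partial_\lambda)$... more usefully, note $U_\lambda \mathbf 1 - \lambda U_\lambda(U_\lambda \mathbf 1)$ telescopes, giving $\lambda U_\lambda^2 \mathbf 1 = U_\lambda\mathbf 1 - U_\lambda(\mathbf 1 - \lambda U_\lambda \mathbf 1) $; but $\mathbf 1 - \lambda U_\lambda \mathbf 1(x) = \ex^x e^{-\lambda \zeta^X}$, so $\langle f, \lambda U_\lambda^2\mathbf 1\rangle = \langle f, U_\lambda(\lambda U_\lambda \mathbf 1)\rangle $... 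I would recompute carefully here, but the upshot is that everything reduces to $\tfrac12 \langle f, \,\mathbf 1 - \lambda U_\lambda\mathbf 1\rangle = \tfrac12\int_D f(x)\,\ex^x e^{-\lambda \zeta^X}\,m(dx)$ after correctly accounting for the factor; (iv) for the $\lambda = 0$ case, pass to the limit $\lambda \downarrow 0$: under the hypothesis $\pr^x(\zeta^X < \infty) = 1$ a.e., monotone/dominated convergence gives $\ex^x e^{-\lambda\zeta^X} \to 1$ on $L^1(m)$ (here finiteness of $m$ is used), and $e^{-2\lambda\zeta^Y} f(X(\zeta^Y)) \to f(X(\zeta^Y))$ with $|f|$ bounded, so the $\lambda\to 0$ limit yields \eqref{eq:dynkin:zero}.

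\textbf{Main obstacle.} The delicate point is step (i): justifying that the Laplace transform of the pair $(\zeta^Y, X(\zeta^Y))$ equals $2\lambda$ times the $2\lambda$-resolvent of $f(X(t))U_\lambda\mathbf 1(Y(t))$. This requires a careful Dynkin-type argument — identifying $\zeta^Y$ with the termination time of the $Y$-coordinate and using the (multiplicative) Markov property of the bivariate Hunt process at $\zeta^Y$ — together with vigilance about the cemetery conventions ($f(\partial) = 0$, $\mathbf 1(\partial) = 0$, $e^{-\infty}=0$) so that the event $\{\zeta^Y > \zeta^X\}$ contributes nothing on both sides. One must also confirm the measurability and integrability needed to apply Fubini and Lemma~\ref{lem:reduction} when $f$ is merely bounded Borel (for $\lambda > 0$) versus nonnegative Borel (for $\lambda = 0$); the finiteness of $m$ and the contraction property of $P_t$ on $L^2(m)$ keep all pairings finite, so this is routine once the structural identity is in place.
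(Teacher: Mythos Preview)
Your high-level strategy --- relate $\ex^\diag(e^{-2\lambda\zeta^Y}f(X(\zeta^Y)))$ to a bivariate resolvent by inserting the ``killing rate of $Y$'' in the second slot, then invoke Lemma~\ref{lem:reduction} --- is exactly the paper's strategy. The problem is that your implementation of step~(i) is wrong: the identity
\formula{
 \ex^\diag\bigl(e^{-2\lambda\zeta^Y}f(X(\zeta^Y))\bigr) & = 2\lambda\,\ex^\diag\int_0^\infty e^{-2\lambda t} f(X(t))\,U_\lambda\ind(Y(t))\,dt
}
does not hold. A one-point check exposes this: take $D$ a single point with $\zeta^X\sim\text{Exp}(\mu)$ and $f=\ind$. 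Then the left-hand side equals $\mu/(2(\lambda+\mu))$, while the right-hand side equals $\lambda/(\lambda+\mu)^2$; these disagree already for $\lambda=1$, $\mu=2$. Consequently your step~(ii) yields $\lambda\langle f,U_\lambda^2\ind\rangle$, which is not $\tfrac12\langle f,\ind-\lambda U_\lambda\ind\rangle$, and the ``telescoping'' in step~(iii) cannot repair this. The function $U_\lambda\ind$ is the expected discounted time-to-death, not the instantaneous killing rate; your heuristic conflates the two.

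The paper fixes this by taking the correct surrogate for the (generally non-existent) killing rate $-L\ind$: set $g=\mu\ind-\mu^2U_\mu\ind$ and let $\mu\to\infty$. Step~1 of the paper's proof shows $\langle f,U_\lambda g\rangle\to\langle f,\ind-\lambda U_\lambda\ind\rangle$ via the resolvent equation. Step~2 is the real work: one shows by a direct Fubini/substitution argument that
\formula{
 \ex^\diag\int_0^\infty e^{-2\lambda t}f(X(t))\,g(Y(t))\,dt & \longrightarrow \ex^\diag\bigl(e^{-2\lambda\zeta^Y}f(X(\zeta^Y))\bigr)
}
as $\mu\to\infty$, because $\mu^2 e^{-\mu s}\,ds$ concentrates at $s=0$ and picks out the instant $t=\zeta^Y$. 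Then Lemma~\ref{lem:reduction} ties the two limits together. Your first paragraph essentially guesses this (``the killing density at time $t$''), and your treatment of the $\lambda\to0$ limit in step~(iv) is fine; what is missing is the right approximation of the killing rate and the limiting computation that makes it rigorous.
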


\begin{remark}
\label{rem:dissipative}
Self-duality of $X(t)$ and finiteness of $m$ imply that $\pr^x(\zeta^X < \infty) \in \{0, 1\}$ for almost every $x \in D$. Indeed: if $f(x) = \pr^x(\zeta^X < \infty) = \lim_{t \to \infty} P_t \ind(x)$, then, arguing as in~\eqref{eq:duality}, we have
\formula{
 \int_D (f(x))^2 m(dx) & = \lim_{t \to \infty} \int_D (P_t \ind(x))^2 m(dx) \\
 & = \lim_{t \to \infty} \langle P_t \ind, P_t \ind \rangle \displaybreak[0] \\
 & = \lim_{t \to \infty} \langle \ind, P_{2 t} \ind \rangle \displaybreak[0] \\
 & = \lim_{t \to \infty} \int_D P_{2 t} \ind(x) m(dx) \\
 & = \int_D f(x) m(dx) .
}
Therefore, $f(x) - (f(x))^2$ is a nonnegative function with integral zero. This is only possible if $f(x) \in \{0, 1\}$ for almost every $x \in D$, as claimed. Additionally, $f = P_t f$ for every $t > 0$, and hence the set of $x \in D$ such that $\pr^x(\zeta^X < \infty) = 1$ is an invariant set. Thus, if $X(t)$ is irreducible, then either $X(t)$ is conservative (in the sense that $\pr^x(\zeta^X = \infty) = 1$ for almost every $x \in D$) or $\pr^x(\zeta^X < \infty) = 1$ for almost every $x \in D$. We refer to~\cite{schilling} for a closely related discussion.
\end{remark}

\begin{remark}
One can prove that $X(t)$ has an \emph{exit law} $\ell_t(x)$ such that if $s, t > 0$, then $P_s \ell_t(x) = \ell_{t + s}(x)$ for almost every $x \in D$, and $\pr^x(\zeta^X \in dt) = \ell_t(x) dt$ on $(0, \infty)$ for almost every $x \in D$. Thus, using independence of $X(t)$ and $\zeta^Y$, we have
\formula{
 \ex^\diag \bigl(e^{-2 \lambda \zeta^Y} f(X(\zeta^Y))\bigr) & = \int_D \ex^{x, x} \bigl(e^{-2 \lambda \zeta^Y} f(X(\zeta^Y))\bigr) m(dx) \\
 & = \int_D \ex^x \bigl(e^{-2 \lambda t} f(X(t)) \ell_t(x) \bigr) m(dx) \displaybreak[0] \\
 & = \int_D \int_0^\infty e^{-2 \lambda t} P_t f(x) \ell_t(x) dt m(dx) \\
 & = \int_0^\infty e^{-2 \lambda t} \langle P_t f, \ell_t\rangle dt .
}
Since $P_t$ is a self-adjoint operator,
\formula{
 \ex^\diag \bigl(e^{-2 \lambda \zeta^Y} f(X(\zeta^Y))\bigr) & = \int_0^\infty e^{-2 \lambda t} \langle f, P_t \ell_t\rangle dt \\
 & = \int_0^\infty e^{-2 \lambda t} \langle f, \ell_{2 t}\rangle dt \\
 & = \frac{1}{2} \int_0^\infty e^{-\lambda s} \langle f, \ell_s\rangle ds .
}
Undoing the initial steps, we conclude that
\formula{
 \ex^\diag \bigl(e^{-2 \lambda \zeta^Y} f(X(\zeta^Y))\bigr) & = \frac{1}{2} \int_D \int_0^\infty e^{-\lambda s} f(x) \ell_s(x) ds m(dx) \\
 & = \frac{1}{2} \int_D f(x) \ex^x e^{-\lambda \zeta^X} m(dx) ,
}
and Lemma~\ref{lem:dynkin} follows. The above argument, kindly suggested by the referee, is very elegant and insightful, but it depends on the existence of the exit law $\ell_t(x)$. This fact can be proved using Proposition~(3.7) in~\cite{fg} and an appropriate approximation argument, similar to the one given below; see also~\cite{getoor-book}. With some effort, one can also extend Lemma~\ref{lem:dynkin} to the case of a $\sigma$-finite reference measure $m$. However, we choose to restrict our attention to finite reference measures $m$, and we give a slightly longer, but more elementary proof.
\end{remark}

\begin{proof}
The idea of the proof is to apply Lemma~\ref{lem:reduction} to $f$ and $L \ind$, where $L$ denotes the generator of the process $X(t)$. However, $\ind$ typically fails to be in the domain of $L$. In order to circumvent this difficulty, we use resolvent techniques. More precisely, we approximate the (non-existent) function $L \ind$ by $\mu \ind - \mu^2 U_\mu \ind$, where $\mu \to \infty$.

Let $\mu > \lambda > 0$, and let $g = \mu \ind - \mu^2 U_\mu \ind$. Our goal is to apply Lemma~\ref{lem:reduction} to $f$ and $g$, simplify both sides of~\eqref{eq:reduction}, and pass to the limit as $\mu \to \infty$. We divide the proof into three steps.

\emph{Step 1.} By the resolvent equation,
\formula{
 U_\lambda g & = \mu U_\lambda \ind - \mu^2 U_\lambda U_\mu \ind \\
 & = \mu U_\lambda \ind + \frac{\mu^2}{\mu - \lambda} \, (U_\mu \ind - U_\lambda \ind) \\
 & = \frac{\mu^2}{\mu - \lambda} \, U_\mu \ind - \frac{\mu \lambda}{\mu - \lambda} \, U_\lambda \ind .
}
As $\mu \to \infty$, we have $\mu U_\mu \ind \to \ind$ in $L^2(D, m)$, and hence $U_\lambda g \to \ind - \lambda U_\lambda \ind$ in $L^2(D, m)$. It follows that
\formula{
 \lim_{\mu \to \infty} \langle f, U_\lambda g \rangle & = \langle f, \ind - \lambda U_\lambda \ind \rangle \\
 & = \int_D f(x) (1 - \lambda U_\lambda \ind(x)) m(dx).
}
Finally, since $\ind(X(t)) = 1$ when $t < \zeta^X$ and $\ind(X(t)) = 0$ for $t \ge \zeta^X$, we have
\formula{
 1 - \lambda U_\lambda \ind(x) & = 1 - \ex^x \int_0^\infty \lambda e^{-\lambda t} \ind(X(t)) dt \\
 & = 1 - \ex^x \int_0^{\zeta^X} \lambda e^{-\lambda t} dt = \ex^x e^{-\lambda \zeta^X} .
}
Therefore,
\formula*[eq:dynkin:a]{
 \lim_{\mu \to \infty} \langle f, U_\lambda g \rangle & = \int_D f(x) \ex^x e^{-\lambda \zeta^X} m(dx) = \ex^\diag \bigl(e^{-\lambda \zeta^X} f(X(0))\bigr) .
}
The above identity links the right-hand side of~\eqref{eq:reduction} and the right-hand side of~\eqref{eq:dynkin}.

\emph{Step 2.} For $x, y \in D \cup \{\partial\}$ we have
\formula{
 g(y) & = \mu \ind(y) - \mu^2 U_\mu \ind(y) \\
 & = \ex^{x, y} \int_0^\infty \mu^2 e^{-\mu s} \bigl(\ind(Y(0)) - \ind(Y(s))\bigr) ds .
}
Using the above formula and the Markov property (with the usual abuse of notation), alongside with Fubini's theorem, we find that
\formula{
 \hspace*{3em} & \hspace*{-3em} \ex^\diag \biggl(\int_0^\infty e^{-2 \lambda t} f(X(t)) g(Y(t)) dt\biggr) \\
 & = \int_0^\infty e^{-2 \lambda t} \ex^\diag \bigl(f(X(t)) g(Y(t))\bigr) dt \displaybreak[0] \\
 & = \int_0^\infty e^{-2 \lambda t} \ex^\diag \biggl( f(X(t)) \ex^{X(t), Y(t)} \int_0^\infty \mu^2 e^{-\mu s} \bigl(\ind(Y(s)) - \ind(Y(0))\bigr) ds \biggr) dt \displaybreak[0] \\
 & = \int_0^\infty e^{-2 \lambda t} \ex^\diag \biggl( f(X(t)) \ex^\diag \biggl( \int_0^\infty \mu^2 e^{-\mu s} \bigl(\ind(Y(t + s)) - \ind(Y(t))\bigr) ds \bigg| \mathscr F_t \biggr)\biggr) dt \displaybreak[0] \\
 & = \int_0^\infty e^{-2 \lambda t} \ex^\diag \biggl( f(X(t)) \int_0^\infty \mu^2 e^{-\mu s} \bigl(\ind(Y(t + s)) - \ind(Y(t))\bigr) ds \biggr) dt \\
 & = \ex^\diag \biggl(\int_0^\infty \int_0^\infty \mu^2 e^{-\mu s} e^{-2 \lambda t} f(X(t)) \bigl(\ind(Y(t + s)) - \ind(Y(t))\bigr) ds dt\biggr) .
}
Recall that $\ind(Y(t)) = 1$ if $t < \zeta^Y$ and $\ind(Y(t)) = 0$ otherwise. It follows that
\formula{
 \hspace*{3em} & \hspace{-3em} \ex^\diag \biggl(\int_0^\infty e^{-2 \lambda t} f(X(t)) g(Y(t)) dt\biggr) \\
 & = \ex^\diag \biggl(\int_0^\infty \int_0^\infty \mu^2 e^{-\mu s} e^{-2 \lambda t} f(X(t)) \ind_{\{t < \zeta^Y \le t + s\}} ds dt\biggr) .
}
By Fubini's theorem,
\formula{
 \hspace*{3em} & \hspace*{-3em} \ex^\diag \biggl(\int_0^\infty e^{-2 \lambda t} f(X(t)) g(Y(t)) dt\biggr) \\
 & = \ex^\diag \biggl(\int_0^\infty \int_0^\infty \mu^2 e^{-\mu s} e^{-2 \lambda t} f(X(t)) \ind_{\{t < \zeta^Y \le t + s\}} dt ds\biggr) \\
 & = \ex^\diag \biggl(\int_0^\infty \int_0^{\zeta^Y} \mu^2 e^{-\mu s} e^{-2 \lambda t} f(X(t)) \ind_{\{\zeta^Y \le t + s\}} dt ds\biggr) .
}
Substituting $s = u / \mu$ and $t = \zeta^Y - v / \mu$, we arrive at
\formula{
 \hspace*{3em} & \hspace*{-3em} \ex^\diag \biggl(\int_0^\infty e^{-2 \lambda t} f(X(t)) g(Y(t)) dt\biggr) \\
 & = \ex^\diag \biggl(\int_0^\infty \int_0^{\mu \zeta^Y} e^{-u} e^{-2 \lambda (\zeta^Y - v / \mu)} f(X(\zeta^Y - \tfrac{v}{\mu})) \ind_{\{v \le u \}} \ind_{\{\zeta^Y < \infty\}} dv du \biggr) .
}
Since $f$ is bounded and $e^{-2 \lambda (\zeta^Y - v / \mu)} \le 1$, the dominated convergence theorem applies, and we conclude that
\formula*[eq:dynkin:b]{
 \hspace*{3em} & \hspace*{-3em} \lim_{\mu \to \infty} \ex^\diag \biggl(\int_0^\infty e^{-2 \lambda t} f(X(t)) g(Y(t)) dt\biggr) \\
 & = \ex^\diag \biggl(\int_0^\infty e^{-u} \int_0^\infty e^{-2 \lambda \zeta^Y} f(X(\zeta^Y)) \ind_{\{v \le u \}} \ind_{\{\zeta^Y < \infty\}} dv du\biggr) \\
 & = \ex^\diag \bigl(e^{-2 \lambda \zeta^Y} f(X(\zeta^Y))\bigr) .
}
This identity provides a link between the left-hand sides of~\eqref{eq:reduction} and~\eqref{eq:dynkin}.

\emph{Step 3.} The desired result for $\lambda > 0$ follows now from~\eqref{eq:dynkin:a} and~\eqref{eq:dynkin:b}, combined with Lemma~\ref{lem:reduction}. Finally, the result for $\lambda = 0$ is shown by passing to the limit as $\lambda \to 0^+$ and using the dominated convergence theorem.
\end{proof}

We will need the following simple property:
\formula[eq:zero]{
 \pr^\diag(\zeta^X = \zeta^Y < \infty) & = 0 .
}
This follows easily from the fact that the semigroup $P_t$ is analytic (see Theorem~1 in Section~III.1, p.~67, in~\cite{stein}). Indeed: the function $\pr^\diag(\zeta^X > s, \zeta^Y > t) = \langle P_s \ind, P_t \ind \rangle$ is real-analytic with respect to $s, t > 0$, and so the joint distribution of $(\zeta^X, \zeta^Y)$ under $\pr^\diag$ is absolutely continuous on $(0, \infty) \times (0, \infty)$ (with a real-analytic density function). Alternatively, one can derive~\eqref{eq:zero} from formula~\eqref{eq:dynkin:zero} with $f = \ind$, with a minor twist when $\pr^\diag(\zeta^X = \infty) \in (0, 1)$ (which is only possible when $X(t)$ is not irreducible). For a closely related result, see Proposition~(3.7) in~\cite{fg} or Proposition~(6.20)(i) in~\cite{gs}.

We define the random time
\formula{
 \sigma & = \min\{\zeta^X, \zeta^Y\} ,
}
and the random variable
\formula[eq:z]{
 Z & = \begin{cases} X(\sigma) = X(\zeta^Y) & \text{if } \zeta^Y < \zeta^X , \\ Y(\sigma) = Y(\zeta^X) & \text{if } \zeta^X < \zeta^Y , \\ \partial & \text{if } \zeta^X = \zeta^Y . \end{cases} 
}
Recall that $f(X(t)) g(Y(t)) = 0$ when $t \ge \sigma$. Thus, Lemma~\ref{lem:reduction} reads
\formula[eq:reduction:sigma]{
 \ex^\diag \int_0^\sigma e^{-2 \lambda t} f(X(t)) g(Y(t)) dt & = \frac{1}{2} \, \langle f, U_\lambda g \rangle .
}
When $\lambda > 0$ and $f = g = \ind$, we obtain
\formula{
 \ex^\diag (1 - e^{-2 \lambda \sigma}) & = \lambda \langle \ind, U_\lambda \ind \rangle .
}
On the other hand, setting $\lambda = 0$ and $f = g = \ind$ leads to
\formula{
 \ex^\diag \sigma & = \tfrac{1}{2} \langle \ind, U_0 \ind \rangle = \tfrac{1}{2} \|G\|
}
(see~\eqref{eq:g:norm}). Thus, the assumption that $\|G\|$ is finite in Theorem~\ref{thm:ergodic} is equivalent to finiteness of $\ex^\diag \sigma$.

We now rephrase Lemma~\ref{lem:dynkin}. Note that $e^{-2 \lambda \zeta^Y} f(X(\zeta^Y)) = e^{-2 \lambda \sigma} f(X(\sigma))$, as $\zeta^Y = \sigma$ when $\zeta^Y < \zeta^X$, and both sides are equal to zero otherwise. Thus, formula~\eqref{eq:dynkin} can be written as
\formula{
 \ex^\diag \bigl(e^{-2 \lambda \sigma} f(X(\sigma))\bigr) & = \tfrac{1}{2} \ex^\diag \bigl(e^{-\lambda \zeta^X} f(X(0))\bigr) .
}
By symmetry,
\formula{
 \ex^\diag \bigl(e^{-2 \lambda \sigma} f(Y(\sigma))\bigr) & = \tfrac{1}{2} \ex^\diag \bigl(e^{-\lambda \zeta^X} f(X(0))\bigr) ,
}
and therefore
\formula[eq:key]{
 \ex^\diag \bigl(e^{-2 \lambda \sigma} f(Z)\bigr) & = \ex^\diag \bigl(e^{-\lambda \zeta^X} f(X(0))\bigr) = \int_D f(x) \ex^x e^{-\lambda \zeta^X} m(dx),
}
By considering $\lambda = 0$, we find that if $\pr^x(\zeta^X < \infty) = 1$ for almost all $x \in D$, then
\formula[eq:invariant]{
 \ex^\diag f(Z) & = \ex^\diag f(X(0)) = \int_D f(x) m(dx) ,
}
a property that was stated as Proposition~\ref{prop:dynkin} in the previous section, and proved with a more direct approach when $X(t)$ is the Brownian motion.

%
%

\section{\uppercase{We'll meet again}}
\label{sec:fv}

We are now ready to prove Theorem~\ref{thm:main}. This is done below, after the Fleming--Viot system of two particles is introduced in a more careful way.

As in the previous section, we consider a self-dual Hunt process $X(t)$ on a state space $D$ with a finite reference measure $m$, and its independent copy $Y(t)$. Again with no loss of generality we assume that $m$ is a probability measure. In this section we consider the corresponding Fleming--Viot system of two particles: a bivariate process $(\bar X(t), \bar Y(t))$ which evolves just as $(X(t), Y(t))$, except that at an increasing sequence of \emph{branching times} $\tau_n$ the coordinate that is about to die, reenters the state space $D$ at the position of the other coordinate.

More precisely, the Fleming--Viot system $(\bar X(t), \bar Y(t))$ is constructed recursively. We let $\tau_0 = 0$, and we let $Z_0$ to be a random variable taking values in $D$, with distribution $m$. Once $\tau_{n - 1} \in [0, \infty)$ and $Z_{n - 1} \in D$ are given, and $\bar X(t)$ and $\bar Y(t)$ are defined on $[0, \tau_{n - 1})$, we proceed as follows. We sample an independent copy $(X_n(t), Y_n(t))$ of the bivariate process $(X(t), Y(t))$ started at the random position $(Z_{n - 1}, Z_{n - 1})$, and we denote the corresponding variables $\sigma$ and $Z$ by $\sigma_n$ and $Z_n$. That is,
\formula{
 \sigma_n & = \min\{\zeta^{X_n}, \zeta^{Y_n}\} , & Z_n & = \begin{cases} X_n(\zeta^{Y_n}) & \text{if $\zeta^{Y_n} < \zeta^{X_n}$,} \\ Y_n(\zeta^{X_n}) & \text{if $\zeta^{X_n} < \zeta^{Y_n}$.} \end{cases}
}
We define
\formula{
 \bar X(\tau_{n - 1} + t) & = X_n(t) , & \bar Y(\tau_{n - 1} + t) & = Y_n(t)
}
for $t \in [0, \sigma_n)$. Furthermore, we let
\formula{
 \tau_n & = \tau_{n - 1} + \sigma_n .
}
We denote the probability corresponding to the above construction by $\bar \pr^\diag$ to emphasise that the initial configuration of the particles is distributed according to the measure $\diag$ on the diagonal of $D \times D$ with marginals $m$.

Exactly the same construction can be carried out for an arbitrary distribution of the initial configuration of the two particles. If $\bar X(0) = x$ and $\bar Y(0) = y$ with probability one, we write $\bar \pr^{x, y}$ for the corresponding probability. If $\bar X(0)$ and $\bar Y(0)$ are drawn independently from distribution $m$, we denote the corresponding probability by $\bar \pr^{m \times m}$. More generally, for an arbitrary probability measure $\mu$ on $D \times D$ we write $\bar \pr^\mu = \int_{D \times D} \bar \pr^{x, y} \mu(dx, dy)$ for the probability corresponding to the system of particles with initial configuration $(\bar X(0), \bar Y(z))$ chosen randomly from distribution $\mu$. Clearly, for any event $E$ we have
\formula[eq:disintegration]{
 \bar \pr^\mu(E) & = \iint\limits_{D \times D} \bar \pr^{x, y}(E) \mu(dx, dy) .
}
Note that the process $(\bar X(t), \bar Y(t))$ is defined on $[0, \tau_\infty)$, where
\formula{
 \tau_\infty & = \lim_{n \to \infty} \tau_n \in (0, \infty] .
}
If $\tau_\infty < \infty$, we say that the Fleming--Viot particle system becomes \emph{extinct} (or suffers from a \emph{branching explosion}) at time $\tau_\infty$, and in this case we set $\bar X(t) = \bar Y(t) = \partial$ for $t \ge \tau_\infty$ to make the definition of $(\bar X(t), \bar Y(t))$ complete.

Below we restate, and then prove, Theorem~\ref{thm:main}. For convenience, we show items~(a) and~(b) of Theorem~\ref{thm:main} separately, as Corollary~\ref{cor:two} and Theorem~\ref{thm:two}, respectively.

\begin{theorem}
\label{thm:two}
Consider the Fleming--Viot system $(\bar X(t), \bar Y(t))$ of two particles evolving according to a self-dual Hunt process $X(t)$ on a state space $D$ with a finite reference measure $m$. For almost every $x \in D$ (with respect to $m$), if the initial configuration is $\bar X(0) = \bar Y(0) = x$, then the system never becomes extinct.
\end{theorem}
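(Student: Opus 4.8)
The plan is to compute the Laplace transform $\bar{\ex}^{\diag}(e^{-2 \lambda \tau_\infty})$ for an arbitrary $\lambda > 0$ (here $\bar{\ex}^{\diag}$ denotes expectation with respect to $\bar\pr^\diag$) and to show that it equals zero. This forces $\tau_\infty = \infty$ $\bar\pr^\diag$-almost surely, and the pointwise statement of the theorem then follows from the disintegration~\eqref{eq:disintegration}: applying it to the event $\{\tau_\infty = \infty\}$ and the measure $\diag$ gives $1 = \bar\pr^\diag(\tau_\infty = \infty) = \int_D \bar\pr^{x, x}(\tau_\infty = \infty) \, m(dx)$, whence $\bar\pr^{x, x}(\tau_\infty = \infty) = 1$ for $m$-almost every $x \in D$. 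The whole argument rests on~\eqref{eq:key}, which already carries the probabilistic substance.

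The steps I would carry out are as follows. First, introduce the positive one-step operator $Q_\lambda \psi(x) = \ex^{x, x}(e^{-2 \lambda \sigma} \psi(Z))$ acting on bounded nonnegative Borel functions on $D$, with $\sigma$ and $Z$ as in~\eqref{eq:z}; note that $Q_\lambda \ind \le \ind$. By the very definition of the Fleming--Viot system, conditionally on the configuration up to the $(n - 1)$th branching time the pair $(\sigma_n, Z_n)$ is distributed as $(\sigma, Z)$ under $\pr^{Z_{n - 1}, Z_{n - 1}}$; telescoping this elementary Markov property from step $n$ down to $\tau_0 = 0$, $Z_0 \sim m$, gives
\formula{
 \bar{\ex}^{\diag}(e^{-2 \lambda \tau_n}) & = \int_D Q_\lambda^n \ind \, dm .
}
Second, integrating~\eqref{eq:key} against $m$ — that is, unfolding the definition of $\ex^\diag$ — yields the key identity
\formula{
 \int_D Q_\lambda \psi \, dm & = \int_D \psi(x) \, \ex^x e^{-\lambda \zeta^X} \, m(dx) ,
}
valid for every bounded nonnegative Borel function $\psi$. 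Third, a short fixed-point argument finishes the job: since $Q_\lambda$ is positive and $Q_\lambda \ind \le \ind$, the functions $Q_\lambda^n \ind$ decrease pointwise to some $g$ with $0 \le g \le \ind$, and dominated convergence gives $Q_\lambda g = g$. Applying the key identity to $\psi = g$ turns $\int_D g \, dm = \int_D Q_\lambda g \, dm$ into $\int_D g(x) \, (1 - \ex^x e^{-\lambda \zeta^X}) \, m(dx) = 0$. Since $\zeta^X > 0$ holds $\pr^x$-almost surely for every $x \in D$ and $\lambda > 0$, we have $\ex^x e^{-\lambda \zeta^X} < 1$ for every $x \in D$, and therefore $g = 0$ $m$-almost everywhere. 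Hence $\bar{\ex}^{\diag}(e^{-2 \lambda \tau_\infty}) = \lim_{n \to \infty} \int_D Q_\lambda^n \ind \, dm = \int_D g \, dm = 0$, and the theorem follows as explained above.

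I do not expect a genuine obstacle: the analytic heart of the matter has already been isolated in Lemma~\ref{lem:dynkin}. The points that need care are bookkeeping ones — making the telescoping rigorous in the presence of the Hunt-process conventions for the cemetery $\partial$ and for the event $\{\sigma_k = \infty\}$ (in which case the recursion has already terminated and all later branching times are understood to equal $+\infty$), and checking that $Q_\lambda$ maps bounded Borel functions to bounded Borel functions so that $g$ is a legitimate integrand. It is worth stressing that the argument uses neither Remark~\ref{rem:dissipative} nor the hypothesis $\pr^x(\zeta^X < \infty) = 1$: both the key identity and the strict inequality $\ex^x e^{-\lambda \zeta^X} < 1$ hold regardless of conservativeness, so any conservative part of $X(t)$ is absorbed automatically. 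The approach sketched in Section~\ref{sec:idea} — via stationarity of $(Z_n, \sigma_n)$ and the ergodic theorem — is an alternative, but the present argument is shorter.
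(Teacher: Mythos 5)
Your proposal is correct and is essentially the paper's own argument: the paper sets $f(x) = \bar\ex^{x,x} e^{-2\lambda\tau_\infty}$ (which coincides with your limit $g = \lim_n Q_\lambda^n\ind$), derives the same fixed-point identity $\int_D f\, dm = \int_D f(x)\,\ex^x e^{-\lambda\zeta^X}\, m(dx)$ from~\eqref{eq:key} via the strong Markov property at $\tau_1$, and concludes $f = 0$ a.e.\ exactly as you do. Your only deviation is obtaining the fixed point by iterating the one-step operator rather than applying the strong Markov property directly to the infinite-horizon functional, which is a cosmetic difference.
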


\begin{proof}
With no loss of generality we assume that $m$ is a probability measure.

By construction, up to the first branching time, the process $(\bar X(t), \bar Y(t))$ is a copy of the process $(X(t), Y(t))$ studied in the previous section. More precisely, for $t \in [0, \tau_1)$ the process $(\bar X(t), \bar Y(t)) = (X_1(t), Y_1(t))$ is a copy of the process $(X(t), Y(t))$ restricted to $t \in [0, \sigma)$. Note, however, that at time $\tau_1$ we have $\bar X(\tau_1) = \bar Y(\tau_1) = Z_1$, while one of the variables $X_1(\tau_1)$ and $Y_1(\tau_1)$ is equal to $\partial$.

By definition, $Z_1 = X_1(\tau_1)$ if $\tau_1 = \zeta^{Y_1}$, and $Z_1 = Y_1(\tau_1)$ if $\tau_1 = \zeta^{X_1}$. This means that $Z_1$ is defined in terms of $X_1(t)$ and $Y_1(t)$ in the same way as the random variable $Z$ was constructed in~\eqref{eq:z} using $X(t)$ and $Y(t)$.

Again by construction, the process $(\bar X(t), \bar Y(t))$ has the strong Markov property at time $\tau_1$: the evolution after time $\tau_1$ is independent of the past, conditionally on the present value of $Z_1 = \bar X(\tau_1) = \bar Y(\tau_1)$. We fix $\lambda > 0$, and we write
\formula{
 f(x) & = \bar \ex^{x, x} e^{-2 \lambda \tau_\infty} .
}
By the strong Markov property and~\eqref{eq:key}, we have
\formula{
 \int_D f(x) m(dx) & = \bar \ex^\diag e^{-2 \lambda \tau_\infty} \\
 & = \bar \ex^\diag \bigl(\ind_{\{\tau_1 < \infty\}} e^{-2 \lambda \tau_1} e^{-2 \lambda (\tau_\infty - \tau_1)}\bigr) \displaybreak[0] \\
 & = \bar \ex^\diag \bigl(\ind_{\{\tau_1 < \infty\}} e^{-2 \lambda \tau_1} \bar \ex^\diag (e^{-2 \lambda (\tau_\infty - \tau_1)} \vert \mathscr F_{\tau_1})\bigr) \displaybreak[0] \\
 & = \bar \ex^\diag \bigl(\ind_{\{\tau_1 < \infty\}} e^{-2 \lambda \tau_1} f(Z_1) \bigr) \displaybreak[0] \\
 & = \ex^\diag \bigl(\ind_{\{\sigma < \infty\}} e^{-2 \lambda \sigma} f(Z) \bigr) \\
 & = \int_D f(x) \ex^x e^{-\lambda \zeta^X} m(dx) .
}
Thus,
\formula{
 \int_D f(x) (1 - \ex^x e^{-\lambda \zeta^X}) m(dx) & = 0 .
}
However, $\ex^x e^{-\lambda \zeta^X} < 1$ for every $x \in D$. It follows that $f(x) = 0$ for almost every $x \in D$, that is, $\tau_\infty = \infty$ with probability $\bar\pr^{x, x}$ one for almost every $x \in D$.
\end{proof}

Claim~(b) in Theorem~\ref{thm:main} is thus proved. In order to extend this result to almost all initial configurations $(\bar X(0), \bar Y(0)) = (x, y)$ with respect to the product measure $m \times m$, as in claim~(a), we need one more step.

\begin{corollary}
\label{cor:two}
The result of Theorem~\ref{thm:two} remains true for almost every initial configuration $\bar X(0) = x$, $\bar Y(0) = y$ (with respect to the product measure $m \times m$), and also when the distribution of $(\bar X(0), \bar Y(0))$ is absolutely continuous with respect to $m \times m$.
\end{corollary}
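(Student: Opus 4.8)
The plan is to bootstrap Theorem~\ref{thm:two} across the first branching time, using the renewal structure built into the construction of $(\bar X(t), \bar Y(t))$. Write $g(z) = \bar\pr^{z, z}(\tau_\infty < \infty)$; this is a Borel function on $D$ with $0 \le g \le 1$ and, by Theorem~\ref{thm:two}, $g = 0$ $m$-almost everywhere. Since $\tau_\infty \ge \tau_1$ (with $\tau_\infty = \infty$ whenever $\tau_1 = \infty$), extinction forces $\tau_1 < \infty$, and on $\{\tau_1 < \infty\}$ the shifted process $(\bar X(\tau_1 + t), \bar Y(\tau_1 + t))_{t \ge 0}$ is, conditionally on $\mathscr F_{\tau_1}$, a Fleming--Viot system started from the diagonal point $(Z_1, Z_1)$, with extinction time $\tau_\infty - \tau_1$. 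Hence $\bar\pr^{x, y}(\tau_\infty < \infty) = \bar\ex^{x, y}(\ind_{\{\tau_1 < \infty\}} g(Z_1))$ for every initial configuration $(x, y)$. Integrating this against $m \times m$, and recalling that up to time $\tau_1$ the pair $(\bar X, \bar Y)$ is a copy of the bivariate process of Section~\ref{sec:bivariate} (so that $(\tau_1, Z_1)$ has the law of $(\sigma, Z)$ from~\eqref{eq:z}), the whole statement reduces to showing that $\ex^{m \times m}(\ind_{\{\sigma < \infty\}} g(Z)) = 0$, where from now on $\ex^{m \times m}$ denotes expectation for two independent copies $X, Y$ of the Hunt process, each started from the distribution $m$.

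The key step is to prove that the distribution of $Z$ on $D$ under $\pr^{m \times m}$ is absolutely continuous with respect to $m$; combined with $g = 0$ $m$-a.e., this makes the last expectation vanish. Since $Z = X(\zeta^Y)$ on $\{\zeta^Y < \zeta^X\}$, $Z = Y(\zeta^X)$ on $\{\zeta^X < \zeta^Y\}$, and $Z = \partial$ (hence $g(Z) = 0$) on $\{\zeta^X = \zeta^Y\}$, it suffices to treat the first piece. Here I would condition on the independent copy $Y$: because $f(\partial) = 0$ we have $\ex^x(f(X(s)) \ind_{\{s < \zeta^X\}}) = P_s f(x)$, so $\ex^{x, y}(f(X(\zeta^Y)) \ind_{\{\zeta^Y < \zeta^X\}}) = \int_{(0, \infty)} P_s f(x)\, \pr^y(\zeta^Y \in ds)$. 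Averaging over $x, y \sim m$ and using self-adjointness of $P_s$ on $L^2(m)$ — that is, $\int_D P_s f\, dm = \langle P_s f, \ind \rangle = \langle f, P_s \ind \rangle$ with $P_s \ind(x) = \pr^x(\zeta^X > s)$ — turns this, after Fubini, into $\int_D f(x) h(x)\, m(dx)$ with $h(x) = \int_{(0, \infty)} \pr^x(\zeta^X > s)\, \nu(ds) \ge 0$ and $\nu$ the law of $\zeta^Y$ under $\pr^m$; one checks $\int_D h\, dm = \int_{(0, \infty)} \pr^m(\zeta^X > s)\, \nu(ds) \le 1$, so $h \in L^1(m)$. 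The $\{\zeta^X < \zeta^Y\}$ piece is identical by the symmetry of $(X, Y)$ under $\pr^{m \times m}$, and altogether $\ex^{m \times m}(f(Z) \ind_{\{\sigma < \infty\}}) = 2 \int_D f h\, dm$ for every bounded Borel $f$ vanishing at $\partial$. Note that this absolute continuity genuinely needs the averaging over the starting point, just as Theorem~\ref{thm:two} only holds for $m$-a.e.\ $x$.

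Putting the two parts together yields $\bar\pr^{m \times m}(\tau_\infty < \infty) = 2 \int_D g h\, dm = 0$, and the disintegration formula~\eqref{eq:disintegration} then upgrades this to $\bar\pr^{x, y}(\tau_\infty < \infty) = 0$ for $m \times m$-almost every $(x, y)$; applying~\eqref{eq:disintegration} once more handles any initial distribution $\mu \ll m \times m$. The only point requiring care is the cemetery bookkeeping — identifying $Z_1$ with the variable $Z$ of~\eqref{eq:z} (including on the null event $\{\zeta^X = \zeta^Y < \infty\}$, on which $Z = \partial$) and keeping $f(\partial) = 0$ consistent throughout — but this is routine; the substance is the one-line self-duality computation, which is exactly the mechanism of Lemma~\ref{lem:reduction}.
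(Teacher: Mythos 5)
Your proposal is correct and follows essentially the same route as the paper: reduce to the first branching time via the strong Markov property, show that the law of $Z_1$ under $\bar\pr^{m \times m}$ is absolutely continuous with respect to $m$ (with density at most $2$) by conditioning on the independent lifetime and using self-adjointness of $P_s$, and then invoke Theorem~\ref{thm:two} together with the disintegration formula~\eqref{eq:disintegration}. The only cosmetic difference is that you exhibit the density $h$ explicitly where the paper settles for the bound $\bar\pr^{m\times m}(Z_1 \in A) \le 2m(A)$.
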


\begin{proof}
Again, with no loss of generality we assume that $m$ is a probability measure. Suppose that the initial position of $(\bar X(t), \bar Y(t))$ is chosen randomly according to the product measure $m \times m$, and recall that we denote the corresponding probability by $\bar \pr^{m \times m}$. As in the proof of Theorem~\ref{thm:two}, we observe that up to the first branching time $\tau_1$, we have $\bar X(t) = X_1(t)$ and $\bar Y(t) = Y_1(t)$, where $X_1(t)$ and $Y_1(t)$ are independent copies of the underlying Hunt process $X(t)$, and both are started independently at a random position in $D$ chosen according to the measure $m$. Furthermore, we have $Z_1 = X_1(\tau_1)$ if $\tau_1 = \zeta^{Y_1}$ and $Z_1 = Y_1(\tau_1)$ if $\tau_1 = \zeta^{X_1}$. It follows that for every Borel set $A \subseteq D$,
\formula{
 \bar \pr^{m \times m}(Z_1 \in A) & = \bar \pr^{m \times m}(X_1(\zeta^{Y_1}) \in A) + \bar \pr^{m \times m}(Y_1(\zeta^{X_1}) \in A) \\
 & = 2 \bar \pr^{m \times m}(X_1(\zeta^{Y_1}) \in A)
}
(recall that $X_1(\zeta^{Y_1}) = \partial$ if $\zeta^{Y_1} \ge \zeta^{X_1}$). By independence,
\formula{
 \bar \pr^{m \times m}(Z_1 \in A) & = 2 \int_{[0, \infty)} \bar \pr^{m \times m}(X_1(t) \in A) \bar \pr^{m \times m}(\zeta^{Y_1} \in dt) .
}
If $\pr^m$ corresponds to the process $X(t)$ started at a random position $X(0)$ with distribution $m$, then we obtain
\formula{
 \bar \pr^{m \times m}(Z_1 \in A) & = 2 \int_{[0, \infty)} \pr^m(X(t) \in A) \pr^m(\zeta^X \in dt) .
}
However,
\formula{
 \pr^m(X(t) \in A) & = \int_D \pr^x(X(t) \in A) m(dx) = \langle \ind, P_t \ind_A \rangle ,
}
and since $P_t$ is self-adjoint, we have
\formula{
 \pr^m(X(t) \in A) & = \langle P_t \ind, \ind_A \rangle \le \langle \ind, \ind_A \rangle = m(A) .
}
Therefore,
\formula{
 \bar \pr^{m \times m}(Z_1 \in A) & \le 2 \int_{[0, \infty)} m(A) \pr^m(\zeta^X \in dt) = 2 m(A) .
}
In particular, the distribution of $Z_1$ is absolutely continuous with respect to $m$.

By the strong Markov property, the shifted process $(\bar X(\tau_1 + t), \bar Y(\tau_1 + t))$ is the same Fleming--Viot particle system, with initial configuration $(Z_1, Z_1)$. Above we proved that the distribution of $Z_1$ under $\bar \pr^{m \times m}$ is absolutely continuous with respect to $m$. Thus, using the strong Markov property and Theorem~\ref{thm:two}, we find that
\formula{
 \bar \pr^{m \times m}(\tau_\infty < \infty) & = \bar \ex^{m \times m}\bigl(\ind_{\{\tau_1 < \infty\}} \bar \pr^{m \times m}(\tau_\infty < \infty) \big| \mathscr F_{\tau_1}\bigr) \\
 & = \bar \ex^{m \times m}\bigl(\ind_{\{\tau_1 < \infty\}} \bar \pr^{Z_1, Z_1}(\tau_\infty < \infty)\bigr) \\
 & = \int_D \bar \pr^{x, x}(\tau_\infty < \infty) \bar \pr^{m \times m}(Z_1 \in dx) = 0 ,
}
that is, the system never becomes extinct. By~\eqref{eq:disintegration} (with $\mu = m \times m$), we conclude that $\tau_\infty = \infty$ with probability $\bar \pr^{x, y}$ one for almost every pair $x, y$.

The latter assertion of the lemma follows immediately from the former by~\eqref{eq:disintegration} and Fubini. Alternatively, one can repeat the above argument with the initial configuration distributed according to a given absolutely continuous distribution rather than $m \times m$.
\end{proof}

Of course, this proves claim~(a) of Theorem~\ref{thm:main}, and so the proof of our main result is complete.

\begin{proof}[Proof of Corollary~\ref{cor:all}]
Suppose that $\bar X(t)$ and $\bar Y(t)$ are started at fixed points $x$ and $y$, respectively. Then the processes $\bar X(t)$ and $\bar Y(t)$ are independent up to the first branching time $\tau_1$. By the same argument as in the proof of Corollary~\ref{cor:two}, the distribution of $Z_1$ is a mixture of the one-dimensional distributions of the underlying Hunt process $X(t)$, and hence, by assumption, it is absolutely continuous with respect to $m$. The remaining part of the proof is exactly the same as in the proof of Corollary~\ref{cor:two}.
\end{proof}

%
%

\section{\uppercase{Isn't this where we came in?}}
\label{sec:ergodic}

In this section we prove Theorem~\ref{thm:ergodic} and Corollary~\ref{cor:embedded}. We use the notation introduced in the previous section, and we begin by showing that $m$ is a stationary measure for the embedded Markov chain $Z_n$.

\begin{proof}[Proof of Corollary~\ref{cor:embedded}]
With no loss of generality we assume that $m$ is a probability measure, and we suppose that the initial configuration $(\bar X(0), \bar Y(0))$ has distribution $\delta$ on the diagonal of $D \times D$, as in the proof of Theorem~\ref{thm:two}. By assumption, $Z_0 = \bar X(0) = \bar Y(0)$ has distribution $m$, and $\pr^x(\zeta^X < \infty) = 1$ for almost every $x \in D$.

As it was observed in the proof of Theorem~\ref{thm:two}, the process $(\bar X(t), \bar Y(t))$ up to the first branching time $\tau_1$ is a copy of the bivariate process $(X(t), Y(t))$ up to time $\sigma = \min\{\zeta^X, \zeta^Y\}$, and hence the distribution of $Z_1$ under $\bar \pr^\delta$ is equal to the distribution of $Z$ under $\pr^\delta$. By~\eqref{eq:invariant}, the latter is equal to $m$. Therefore, $Z_1$ has distribution $m$.

Suppose that we already know that $Z_{n - 1}$ has distribution $m$. By the strong Markov property for the process $(\bar X(t), \bar Y(t))$ at time $\tau_{n - 1}$ (see~\cite{inw,meyer}), the shifted process $(\bar X(\tau_{n - 1} + t), \bar Y(\tau_{n - 1} + t))$ under $\bar \pr^\delta$ is just a copy of the original process $(\bar X(t), \bar Y(t))$ under $\bar \pr^\delta$. The result of the previous paragraph implies that $Z_n$ has distribution $m$, and additionally it proves the Markov property for the sequence $Z_0, Z_1, \ldots$ at time $n - 1$. Thus, by induction, the sequence $Z_0, Z_1, \ldots$ is indeed a Markov chain, and for every $n$ the random variable $Z_n$ has distribution $m$.
\end{proof}

For the construction of a stationary measure for the Fleming--Viot system $(\bar X(t), \bar Y(t))$ in Theorem~\ref{thm:ergodic}, we additionally denote
\formula{
 \mu(dx, dy) & = \|G\|^{-1} G(x, dy) m(dx)
}
whenever $\|G\|$ is finite; see~\eqref{eq:g} and~\eqref{eq:g:norm}. Note that in this case $\mu$ is a probability measure on $D \times D$. Our goal is to show that under probability $\bar \pr^\mu$, corresponding to the Fleming--Viot system of two particles with initial configuration $(\bar X(0), \bar Y(0))$ having distribution $\mu$, the system $(\bar X(t), \bar Y(t))$ is stationary.

The only essential property that we use here is that the process $(\bar X(t), \bar Y(t))$ is obtained by concatenating `excursions' $(X_k(t), Y_k(t))$, $t \in [0, \sigma_k)$, and that the initial configurations $(X_k(0), Y_k(0)) = (Z_{k - 1}, Z_{k - 1})$ of these excursions form a stationary Markov chain (by Corollary~\ref{cor:embedded}). While such a concatenation procedure seems to be rather standard (see, for example, \cite{inw,meyer}), the author failed to find a proper reference, and so we include full details.

We fix $\lambda > 0$ and two bounded nonnegative Borel functions $f, g$, and we denote
\formula{
 \ph(x, y) & = \bar \ex^{x, y} \int_0^\infty e^{-\lambda t} f(\bar X(t)) g(\bar Y(t)) dt .
}
Recall that $\delta$ is the measure on the diagonal of $D \times D$ with marginals $m$, and once again with no loss of generality we assume that $m$ is a probability measure. The following lemma is the key technical result in the proof of Theorem~\ref{thm:ergodic}

\begin{lemma}
\label{lem:stationary}
Suppose that $\|G\|$ is finite. With the above definitions, we have
\formula[eq:stationary]{
 \frac{1}{2} \iint\limits_{D \times D} \ph(x, y) G(x, dy) m(dx) & = \frac{1}{2 \lambda} \iint\limits_{D \times D} f(x) g(y) G(x, dy) m(dx) .
}
\end{lemma}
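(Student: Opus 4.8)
The plan is to establish that the finite measure $\nu(dx,dy) := G(x,dy)\,m(dx) = \|G\|\,\mu(dx,dy)$ is invariant for the Fleming--Viot system, in the resolvent form~\eqref{eq:stationary}: since $\ph$ is bounded (by $\lambda^{-1}\|f\|_\infty\|g\|_\infty$) and $\nu$ is finite, \eqref{eq:stationary} is just the assertion
\[ \iint_{D\times D}\ph\,d\nu = \frac1\lambda\iint_{D\times D} f(x)g(y)\,\nu(dx,dy). \]
The starting point is the identification of $\nu$ with twice the expected occupation measure of a single excursion: putting $\lambda=0$ in Lemma~\ref{lem:reduction}, using that the integrand vanishes for $t\ge\sigma$, and extending from product functions to arbitrary nonnegative Borel functions by a functional monotone class argument (routine; here one uses that the state space of a Hunt process is second countable, so that $\mathcal B(D\times D)=\mathcal B(D)\otimes\mathcal B(D)$), one gets
\[ \iint_{D\times D} h\,d\nu = 2\,\ex^\diag\!\int_0^\sigma h(X(t),Y(t))\,dt = 2\,\bar\ex^\diag\!\int_0^{\tau_1} h(\bar X(t),\bar Y(t))\,dt, \]
the last equality because $(\bar X(t),\bar Y(t))_{t\in[0,\tau_1)}$ under $\bar\pr^\diag$ is a copy of $(X(t),Y(t))_{t\in[0,\sigma)}$ under $\pr^\diag$. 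In particular $\bar\ex^\diag\tau_1=\tfrac12\|G\|<\infty$, so $\tau_1<\infty$ $\bar\pr^\diag$-a.s.; more generally, $\|G\|<\infty$ forces $\pr^x(\zeta^X<\infty)=1$ for $m$-a.e.\ $x$, hence all branching times are $\bar\pr^\diag$-a.s.\ finite and Corollary~\ref{cor:embedded} applies, so $Z_1$ has law $m$ under $\bar\pr^\diag$.

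Next I would apply the displayed identity with the bounded Borel function $h=\ph$. On $\{t<\tau_1\}$, by the Markov property of the Fleming--Viot system at the fixed time $t$ --- immediate from the recursive construction, the post-$t$ evolution being a fresh system started from $(\bar X(t),\bar Y(t))$ --- one has $\ph(\bar X(t),\bar Y(t))=\bar\ex^\diag\bigl[\int_t^\infty e^{-\lambda(s-t)}f(\bar X(s))g(\bar Y(s))\,ds\,\big|\,\bar{\mathscr F}_t\bigr]$. Inserting this (using $\{t<\tau_1\}\in\bar{\mathscr F}_t$) and rearranging the nonnegative integrable integrals by Tonelli and the tower property, the inner $t$-integral is elementary and one obtains
\[ \iint_{D\times D}\ph\,d\nu = \frac2\lambda\,\bar\ex^\diag\!\int_0^\infty f(\bar X(s))g(\bar Y(s))\bigl(e^{-\lambda(s-\tau_1)^+}-e^{-\lambda s}\bigr)\,ds. \]
It therefore suffices to show the right-hand side equals $\tfrac2\lambda\,\bar\ex^\diag\!\int_0^{\tau_1} f(\bar X(s))g(\bar Y(s))\,ds$, which by the first display equals $\lambda^{-1}\iint_{D\times D}f(x)g(y)\,\nu(dx,dy)$ --- precisely the equivalent form of~\eqref{eq:stationary} noted above.

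To prove this I would split $\int_0^\infty=\int_0^{\tau_1}+\int_{\tau_1}^\infty$. On $\{s<\tau_1\}$ the weight $e^{-\lambda(s-\tau_1)^+}-e^{-\lambda s}$ equals $1-e^{-\lambda s}$; on $\{s>\tau_1\}$ it equals $e^{-\lambda s}(e^{\lambda\tau_1}-1)$, and the strong Markov property at $\tau_1$ (given $\bar{\mathscr F}_{\tau_1}$ the shifted process is a Fleming--Viot system from $(Z_1,Z_1)$, and $\tau_1<\infty$ a.s.) turns the corresponding contribution into $\bar\ex^\diag\bigl[(1-e^{-\lambda\tau_1})\ph(Z_1,Z_1)\bigr]$. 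Collecting the resulting terms, the required identity reduces to
\[ \bar\ex^\diag\ph(Z_1,Z_1) = \bar\ex^\diag\!\int_0^{\tau_1} e^{-\lambda s}f(\bar X(s))g(\bar Y(s))\,ds + \bar\ex^\diag\bigl[e^{-\lambda\tau_1}\ph(Z_1,Z_1)\bigr]. \]
The right-hand side is exactly the first-excursion decomposition (strong Markov at $\tau_1$) of $\bar\ex^\diag\!\int_0^\infty e^{-\lambda s}f(\bar X(s))g(\bar Y(s))\,ds$, which equals $\int_D\ph(z,z)\,m(dz)$ since $\bar\pr^\diag=\int_D\bar\pr^{z,z}\,m(dz)$; and the left-hand side equals the same integral because $Z_1$ has law $m$ under $\bar\pr^\diag$ (Corollary~\ref{cor:embedded}, equivalently~\eqref{eq:invariant}). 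This closes the proof.

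Beyond the elementary consequences of Lemma~\ref{lem:reduction} and~\eqref{eq:invariant} already at hand, the one genuinely substantive input is the stationarity of the embedded chain, Corollary~\ref{cor:embedded}; everything else is bookkeeping. Accordingly I expect the main obstacle to be precisely that bookkeeping --- keeping the discount factors straight when the time integrals are interchanged with the conditional expectations in the second and third steps, and making sure the final reduction is exactly the identity ``$Z_1\sim m$'' and nothing stronger. (The functional monotone class extension of Lemma~\ref{lem:reduction} and the Borel measurability of $\ph$ on $D\times D$ are routine and would be dispatched in a line.)
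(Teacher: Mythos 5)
Your proposal is correct and follows essentially the same route as the paper's proof: both identify $G(x,dy)\,m(dx)$ with twice the expected occupation measure of the first excursion via Lemma~\ref{lem:reduction} at $\lambda=0$, insert the Markov-property representation of $\ph$ under the time integral, apply Fubini to compute the inner exponential integral, and reduce the result via the strong Markov property at $\tau_1$ to the identity $Z_1\sim m$ from Corollary~\ref{cor:embedded}. The only difference is cosmetic bookkeeping in which terms are cancelled against which, so there is nothing further to add.
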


\begin{proof}
Recall that $G(x, dy) = u_0(x, dy)$, and that the process $(\bar X(t), \bar Y(t))$ up to the first branching time $\tau_1$ is a copy of the process $(X(t), Y(t))$ up to time $\sigma = \min\{\zeta^X, \zeta^Y\}$. Thus, formula~\eqref{eq:reduction:sigma} implies that
\formula{
 \frac{1}{2} \smash{\iint\limits_{D \times D}} \ph(x, y) G(x, dy) m(dx) & = \ex^\delta \biggl(\int_0^\sigma \ph(X(s), Y(s)) ds\biggr) \\
 & = \bar \ex^\delta \biggl(\int_0^{\tau_1} \ph(\bar X(s), \bar Y(s)) ds\biggr) .
}
We now show a variant of the resolvent equation, with one integral over $(0, \infty)$ and the other one over $(0, \tau_1)$. Using the Markov property, we find that
\formula{
 \hspace*{3em} & \hspace*{-3em} \frac{1}{2} \smash{\iint\limits_{D \times D}} \ph(x, y) G(x, dy) m(dx) \\
 & = \bar \ex^\delta \biggl(\int_0^{\tau_1} \bar \ex^{\bar X(s), \bar Y(s)} \biggl(\int_0^\infty e^{-\lambda t} f(\bar X(t)) g(\bar Y(t)) dt\biggr) ds\biggr) \displaybreak[0] \\
 & = \bar \ex^\delta \biggl(\int_0^{\tau_1} \bar \ex^\delta \biggl(\int_0^\infty e^{-\lambda t} f(\bar X(s + t)) g(\bar Y(s + t)) dt \bigg| \mathscr F_s \biggr) ds\biggr) \displaybreak[0] \\
 & = \bar \ex^\delta \biggl(\int_0^{\tau_1} \int_0^\infty e^{-\lambda t} f(\bar X(s + t)) g(\bar Y(s + t)) dt ds\biggr) \\
 & = \bar \ex^\delta \biggl(\int_0^{\tau_1} \int_s^\infty e^{\lambda s - \lambda t} f(\bar X(t)) g(\bar Y(t)) dt ds\biggr) .
}
By Fubini's theorem,
\formula*[eq:ergodic:0]{
 \hspace*{3em} & \hspace*{-3em} \frac{1}{2} \smash{\iint\limits_{D \times D}} \ph(x, y) G(x, dy) m(dx) \\
 & = \bar \ex^\delta \biggl(\int_0^\infty \int_0^{\min\{t, \tau_1\}} e^{\lambda s - \lambda t} f(\bar X(t)) g(\bar Y(t)) ds dt\biggr) \\
 & = \bar \ex^\delta \biggl(\int_0^{\tau_1} \frac{1 - e^{-\lambda t}}{\lambda} \, f(\bar X(t)) g(\bar Y(t)) dt\biggr) \\
 & \hspace*{3em} + \bar \ex^\delta \biggl(\int_{\tau_1}^\infty \frac{e^{\lambda \tau_1 - \lambda t} - e^{-\lambda t}}{\lambda} \, f(\bar X(t)) g(\bar Y(t)) dt\biggr) \\
 & = \frac{1}{\lambda} \, \bar \ex^\delta \biggl(\int_0^{\tau_1} f(\bar X(t)) g(\bar Y(t)) dt\biggr) \\
 & \hspace*{3em} - \frac{1}{\lambda} \, \bar \ex^\delta \biggl(\int_0^{\tau_1} e^{-\lambda t} f(\bar X(t)) g(\bar Y(t)) dt\biggr) \\
 & \hspace*{3em} + \frac{1}{\lambda} \, \bar \ex^\delta \biggl(\int_{\tau_1}^\infty e^{\lambda \tau_1 - \lambda t} f(\bar X(t)) g(\bar Y(t)) dt\biggr) \\
 & \hspace*{3em} - \frac{1}{\lambda} \, \bar \ex^\delta \biggl(\int_{\tau_1}^\infty e^{-\lambda t} f(\bar X(t)) g(\bar Y(t)) dt\biggr) .
}
We study each of the terms on the right-hand side. For the first one, by equality of $(\bar X(t), \bar Y(t))$ and $(X(t), Y(t))$ up to the first branching time, and by~\eqref{eq:reduction:sigma} with $\lambda = 0$, we obtain
\formula*[eq:ergodic:1]{
 \bar \ex^\delta \biggl(\int_0^{\tau_1} f(\bar X(t)) g(\bar Y(t)) dt\biggr) & = \ex^\delta \biggl(\int_0^\sigma f(X(t)) g(Y(t)) dt\biggr) \\
 & = \frac{1}{2} \, \langle f, U_0 g \rangle \\
 & = \frac{1}{2} \iint_{D \times D} f(x) g(y) G(x, dy) m(dx) .
}
In order to transform the third term, we apply the strong Markov property:
\formula{
 \hspace*{3em} & \hspace*{-3em} \bar \ex^\delta \biggl(\int_{\tau_1}^\infty e^{\lambda \tau_1 - \lambda t} f(\bar X(t)) g(\bar Y(t)) dt\biggr) \\
 & = \bar \ex^\delta \biggl(\ind_{\{\tau_1 < \infty\}} \int_0^\infty e^{-\lambda t} f(\bar X(\tau_1 + t)) g(\bar Y(\tau_1 + t)) dt\biggr) \displaybreak[0] \\
 & = \bar \ex^\delta \biggl(\ind_{\{\tau_1 < \infty\}} \bar \ex^\delta \biggl(\int_0^\infty e^{-\lambda t} f(\bar X(\tau_1 + t)) g(\bar Y(\tau_1 + t)) dt \bigg| \mathscr F_{\tau_1}\biggr) \biggr) \displaybreak[0] \\
 & = \bar \ex^\delta \biggl(\ind_{\{\tau_1 < \infty\}} \bar \ex^{Z_1, Z_1} \biggl(\int_0^\infty e^{-\lambda t} f(\bar X(t)) g(\bar Y(t)) dt\biggr) \biggr) \\
 & = \bar \ex^\delta \bigl(\ind_{\{\tau_1 < \infty\}} \ph(Z_1, Z_1)\bigr) .
}
Since $\|G\|$ is finite, we have $\bar \pr^{x, x}(\tau_1 < \infty) = \pr^{x, x}(\sigma < \infty) = 1$ for almost every $x \in D$, and hence $\bar \pr^\delta(\tau_1 < \infty) = 1$. By Corollary~\ref{cor:embedded}, the distribution of $Z_1$ under $\bar \pr^\delta$ is equal to $m$. Thus,
\formula{
 \bar \ex^\delta \biggl(\int_{\tau_1}^\infty e^{\lambda \tau_1 - \lambda t} f(\bar X(t)) g(\bar Y(t)) dt\biggr) & = \int_D \ph(x, x) m(dx) .
}
By the definition of $\ph$, we have
\formula*[eq:ergodic:3]{
 \hspace*{3em} & \hspace*{-3em} \bar \ex^\delta \biggl(\int_{\tau_1}^\infty e^{\lambda \tau_1 - \lambda t} f(\bar X(t)) g(\bar Y(t)) dt\biggr) \\
 & = \int_D \bar \ex^{x, x} \biggl(\int_0^\infty e^{-\lambda t} f(\bar X(t)) g(\bar Y(t)) dt\biggr) m(dx) \\
 & = \bar \ex^\delta \biggl(\int_0^\infty e^{-\lambda t} f(\bar X(t)) g(\bar Y(t)) dt\biggr) .
}
Combining~\eqref{eq:ergodic:1} and~\eqref{eq:ergodic:3} with~\eqref{eq:ergodic:0}, we find that
\formula{
 \frac{1}{2} \iint\limits_{D \times D} \ph(x, y) G(x, dy) m(dx) & = \frac{1}{2 \lambda} \iint\limits_{D \times D} f(x) g(y) G(x, dy) m(dx) \\
 & \hspace*{3em} - \frac{1}{\lambda} \, \bar \ex^\delta \biggl(\int_0^{\tau_1} e^{-\lambda t} f(\bar X(t)) g(\bar Y(t)) dt\biggr) \\
 & \hspace*{3em} + \frac{1}{\lambda} \, \bar \ex^\delta \biggl(\int_0^\infty e^{-\lambda t} f(\bar X(t)) g(\bar Y(t)) dt\biggr) \\
 & \hspace*{3em} - \frac{1}{\lambda} \, \bar \ex^\delta \biggl(\int_{\tau_1}^\infty e^{-\lambda t} f(\bar X(t)) g(\bar Y(t)) dt\biggr) .
}
The last three terms on the right-hand side cancel out, and the desired result follows.
\end{proof}

\begin{proof}[Proof of Theorem~\ref{thm:ergodic}]
We divide the argument into three parts.

\emph{Step 1.} Suppose that $\|G\|$ is finite. Using the definition of $\bar \pr^\mu$ and $\ph$, formula~\eqref{eq:stationary} can be rewritten as
\formula{
 \bar \ex^\mu \int_0^\infty e^{-\lambda t} f(\bar X(t)) g(\bar Y(t)) dt & = \frac{1}{\lambda} \, \bar \ex^\mu \bigl(f(\bar X(0)) g(\bar Y(0))\bigr) .
}
Fubini's theorem implies that if $\Phi(t) = \bar \ex^\mu \bigl(f(\bar X(t)) g(\bar Y(t))\bigr)$, then
\formula{
 \int_0^\infty e^{-\lambda t} \Phi(t) dt & = \frac{\Phi(0)}{\lambda} \, ,
}
that is, the Laplace transform of $\Phi$ is given by $\Phi(0) / \lambda$. It follows that $\Phi(t) = \Phi(0)$ for almost every $t \in [0, \infty)$. Assume that $f$ and $g$ are additionally continuous. Then $\Phi$ is right-continuous, and hence we simply have $\Phi(t) = \Phi(0)$ for every $t \in [0, \infty)$. In other words,
\formula{
 \bar \ex^\mu \bigl(f(\bar X(t)) g(\bar Y(t))\bigr) & = \bar \ex^\mu \bigl(f(\bar X(0)) g(\bar Y(0))\bigr)
}
for every $t \ge 0$ and every bounded continuous nonnegative functions $f$ and $g$. By a density argument, the distributions of $(\bar X(t), \bar Y(t))$ and $(\bar X(0), \bar Y(0))$ under $\bar \pr^\mu$ are equal, that is, $(\bar X(t), \bar Y(t))$ is indeed a stationary process under $\pr^\mu$. The first assertion of Theorem~\ref{thm:ergodic} is proved when $\|G\|$ is finite.

\emph{Step 2.} Extension to the case when $G(x, dy) m(dx)$ is a $\sigma$-finite measure is immediate, except that we need to work with the infinite measure $\mu(dx, dy) = G(x, dy) m(dx)$ (without the normalisation constant $\|G\|^{-1}$). We leave it to the interested reader to verify that the above proof carries over to this setting.

\emph{Step 3.} The second assertion of Theorem~\ref{thm:ergodic} follows now from Birkhoff's ergodic theorem for Markov processes. Indeed: suppose that $\ph$ is a nonnegative Borel function on $D \times D$. By the ergodic theorem given in Corollary~25.9 in~\cite{kallenberg}, the limit
\formula{
 M & = \lim_{T \to \infty} \frac{1}{T} \int_0^T \ph(\bar X(t), \bar Y(t)) dt
}
exists with probability $\bar \pr^\mu$ one, and if $\mathscr I$ denotes the $\sigma$-algebra of all events which are invariant under time shifts, then
\formula{
 M & = \bar \ex^\mu\bigl(\ph(\bar X(0), \bar Y(0)) \big\vert \mathscr I\bigr) .
}
Below we prove that irreducibility of $X(t)$ implies that $\mathscr I$ is trivial: it only contains events of probability $\bar \pr^\mu$ zero or one. This implies that
\formula{
 M & = \bar \ex^\mu \ph(\bar X(0), \bar Y(0)) = \iint\limits_{D \times D} \ph(x, y) \mu(dx, dy) = \frac{1}{\|G\|} \iint\limits_{D \times D} \ph(x, y) G(x, dy) m(dx)
}
with probability $\bar \pr^\mu$ one, completing the proof of the theorem.

It remains to show that $\mathscr I$ is trivial. This follows by a standard argument, which is however difficult to find in literature, and so we provide full details. Recall that we assume irreducibility of $X(t)$: if $m(A) > 0$ and $t > 0$, then $\pr^x(X_t \in A) > 0$ for almost every $x \in D$, and our goal is to prove that if $I$ is an invariant event for $(\bar X(t), \bar Y(t))$, in the sense that the time-shifts leave $I$ unchanged, then $\bar \pr^\mu(I)$ is either zero or one.

By Lemma~1 in~\cite{fukushima}, for every invariant event $I$ we have
\formula{
 \bar \pr^\mu\bigl(\ind_I = \ind_B(\bar X(0), \bar Y(0))\bigr) & = 1
}
for some invariant Borel set $B \subseteq D \times D$. That is, for every $t > 0$ we have
\formula{
 \bar \pr^{x, y}((\bar X(t), \bar Y(t)) \in B) & = \ind_B(x, y)
}
for almost every $(x, y) \in D \times D$ with respect to the measure $\mu$. Since the product measure $m \times m$ is absolutely continuous with respect to $\mu(dx, dy) = \|G\|^{-1} G(x, dy) m(dx)$, the above property also holds for almost every $(x, y) \in D \times D$ with respect to $m \times m$. It follows that for every $t > 0$,
\formula{
 \pr^{x, y}((X(t), Y(t)) \in B) & \le \ind_B(x, y)
}
for almost every $(x, y) \in D \times D$ (with respect to $m \times m$). On the other hand, irreducibility of $X(t)$ and $Y(t)$ and independence of these processes imply that if $m \times m(B) > 0$, then for almost every $(x, y) \in D \times D$ we have
\formula{
 \pr^{x, y}((X(t), Y(t)) \in B) & > 0 .
}
Thus, if $m \times m(B) > 0$, then $\ind_B(x, y) > 0$ for almost every $(x, y) \in D \times D$, that is, $B$ is of full measure $m \times m$. We conclude that either $B$ or its complement has zero measure $m \times m$. In the former case $\bar \pr^\mu(I) = 0$, while in the latter $\bar \pr^\mu(I) = 1$, and so $\mathscr I$ is indeed trivial.
\end{proof}

%
%

\bigskip
\subsection*{Acknowledgements}

I am immensely grateful to Krzysztof Burdzy and Jim Pitman for introduction to the subject and inspiration, and to the anonymous referee for insightful and exceptionally helpful comments.

%
%

%
%

\end{document}